\newtheorem{thm}{Theorem}
\theoremstyle{plain}
\theoremstyle{definition}
\newtheorem{method}{Method}
\numberwithin {equation}{section}
\begin{document}
\title
{A new class of optimal four-point methods with \\ convergence order 16 for solving nonlinear equations}
\author{Somayeh Sharifi$^a$\thanks{s.sharifi@iauh.ac.ir} \and Mehdi Salimi$^b$\thanks{Corresponding author: mehdi.salimi@tu-dresden.de}
 \and Stefan
Siegmund$^b$\thanks{stefan.siegmund@tu-dresden.de}\and Taher
Lotfi$^c$\thanks{lotfi@iauh.ac.ir}}
\date{}
\maketitle
\begin{center}
$^{a}$Young Researchers and Elite Club, Hamedan Branch, Islamic
Azad University, Hamedan, Iran\\
$^{b}$Center for Dynamics, Department of Mathematics, Technische
Universit{\"a}t Dresden, 01062 Dresden, Germany\\
$^{c}$Department of Mathematics, Hamedan Branch, Islamic Azad
University, Hamedan, Iran\\
\end{center}
\maketitle

\begin{abstract}\noindent
We introduce a new class of optimal iterative methods without memory for approximating a simple root of a given
nonlinear equation. The proposed class uses four function evaluations and one first derivative evaluation per iteration and it is therefore optimal in the sense of Kung and Traub's conjecture. We present the construction, convergence
analysis and numerical implementations, as well as comparisons of accuracy and basins of attraction between our method and existing optimal methods for several test problems.

\bigskip\noindent
\textbf{Keywords}: Simple root, four-step iterative method, Kung and Traub conjecture, optimal order of
convergence, computational efficiency.
 \end{abstract}

\section{Introduction}
Solving nonlinear equations is a basic and extremely valuable tool
in all fields in the sciences and in engineering. One can
distinguish between two general approaches for solving nonlinear
equations numerically, namely, one-step and multi-step methods.
Multi-step methods overcome some computational issues encountered
with one-step iterative methods, typically they allow us to
achieve a greater accuracy with the same number of function
evaluations. In this context an unproved conjecture by Kung and
Traub \cite{Kung} plays a central role, it states that an optimal
multi-step method without memory which uses $n+1$ evaluations
could achieve a convergence order of $2^{n}$. Considering this
conjecture, many optimal two-step and three-step methods have been
presented. However, because of complexity in construction and
development, optimal four-point methods are rare and can be
considered as an active research problem.

Prominent optimal two-point methods have been introduced by
Jarratt \cite{Jarrat}, King \cite{King} and Ostrowski
\cite{Ostrowski}. Some optimal three-point methods have been
proposed by Chun and Lee \cite{Chun1}, Cordero et al.\
\cite{Cordero5}-\cite{Cordero22}, Khattri and Steihaug
\cite{Khattri}, Lotfi et al.\ \cite{Lotfi0}-\cite{Lotfi1},
Petkovic et al. \cite{Petkovic1,Petkovic2} and Sharma et al.\
\cite{Sharma1}. Neta \cite{Neta0} has presented methods with
convergence orders $8$ and $16$. Babajee and Thukral
\cite{Babajee} developed a four-point method with convergence
order $16$ based on the King family of methods. In
\cite{Geum1}-\cite{Geum3} Geum and Kim provided three methods with
convergence order 16 by using weight function methods.

We construct a new optimal class of four-point methods without memory which uses
five function evaluations per iteration. The paper is organized as follows: Section 2 is devoted to the construction of the new
optimal class. We first construct classes of optimal two-point and three-point methods and then utilize them in the three first steps of our new method. The section also includes convergence analysis of all these
methods. Numerical performance and comparisons with other methods are
illustrated in Section 3. A conclusion is provided in Section 4.

\section{Main results: \\
Construction, error and convergence analysis} This section deals
with construction, and error and convergence analysis of our
method. First, we try to introduce an optimal two-point  class
(this class contains no originality, and we review it here only
for easy-reference in constructing the next two classes), then an
optimal three-point class is presented, and, finally, our optimal
four-point class will be developed.

\subsection{Construction of an optimal two-point class}
In this section we construct a new optimal two-point method for
solving nonlinear equations which employs Newton's one-point
method \cite{Ostrowski,Traub} and suitable weight functions for
evaluations at two points.

Newton's method \cite{Ostrowski,Traub}

\begin{equation}\label{a1}
  \begin{split}
  y_{n}&=x_n-\dfrac{f(x_n)}{f'(x_n)},
    \quad (n \in \mathbb{N}_0 = \{0, 1, \dots\})
  ,
  \end{split}
\end{equation}
where $x_0$ denotes the initial approximation of $x^{*}$, is of
convergence order two. To increase the order of convergence, we
add one Newton step to the method \eqref{a1} to get
\begin{equation}\label{a2}
\begin{cases}
y_n=x_n-\dfrac{f(x_n)}{f'(x_n)},\\
x_{n+1}=y_n-\dfrac{f(y_n)}{f'(y_n)}.
\end{cases}
\end{equation}
The method \eqref{a2} uses four function evaluations to achieve
order four, therefore the method is not an optimal two-point
method. We modify \eqref{a2} by approximating $f'(y_n)$ by
\begin{equation*}
  f'(y_n) \approx \frac{f'(x_n)}{G(t_n)},
\end{equation*}
using only the values $f(x_n)$, $f(y_n)$, and $f'(x_n)$. More
precisely, we use the abbreviations $t_n=\frac{f(y_n)}{f(x_n)}$
and utilize Mathematica \cite{Hazrat} to carefully choose the
weight function $G : \mathbb{R} \rightarrow \mathbb{R}$ from a
class of admissible functions such that the scheme
\begin{equation}\label{a3}
\begin{cases}
y_n=x_n-\dfrac{f(x_n)}{f'(x_n)},\\
x_{n+1}=y_n- G(t_n) \cdot\dfrac{f(y_n)}{f'(x_n)},
\end{cases}
\end{equation}
is of order four.

\begin{thm}\label{theorem1}
Assume that $ f: D \subset \mathbb{R} \rightarrow
\mathbb{R}$ is four times continuously differentiable
and has a simple zero $x^{*}\in D$ and
$G\in C^3\mathbb{R}$ is
sufficiently  continuously differentiable.
If
\begin{equation*}
  \begin{split}
    G_{0} &=1, \quad G_{1}=2,\quad |G_2|<\infty,
    \end{split}
\end{equation*}
where $G_{i}=\frac{d^i G(t_n)}{dt_n^i}|_{0}$ for $i=0,1,\ldots$,
and the initial point $x_{0}$ is sufficiently close to $x^{*}$,
then the sequence $\{x_n\}$ defined by \eqref{a3} converges to
$x^{*}$ and the order of convergence is four.
\end{thm}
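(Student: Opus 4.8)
The plan is to prove this by Taylor expansion around the simple root, tracking the error terms through both steps of the scheme.

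The plan is to prove this by a standard Taylor-expansion argument around the simple zero $x^*$, tracking the error through both substeps of \eqref{a3}. I would set $e_n = x_n - x^*$ and introduce the normalized derivatives $c_k = \frac{f^{(k)}(x^*)}{k!\,f'(x^*)}$ for $k \geq 2$, so that $f(x_n) = f'(x^*)\bigl(e_n + c_2 e_n^2 + c_3 e_n^3 + \cdots\bigr)$ and $f'(x_n) = f'(x^*)\bigl(1 + 2c_2 e_n + 3c_3 e_n^2 + \cdots\bigr)$. Since $f$ is four times continuously differentiable and $x^*$ is simple (so $f'(x^*) \neq 0$), these expansions are valid to the order needed.

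First I would substitute these into the Newton step to obtain the error of the intermediate point, $y_n - x^* = c_2 e_n^2 + O(e_n^3)$, the familiar quadratic convergence of Newton's method. Expanding $f(y_n)$ about $x^*$ with this and forming the ratio then gives $t_n = \frac{f(y_n)}{f(x_n)} = c_2 e_n + O(e_n^2)$, so $t_n \to 0$ and $t_n$ is itself of order $e_n$. This justifies expanding the weight function as $G(t_n) = G_0 + G_1 t_n + \tfrac{1}{2}G_2 t_n^2 + o(t_n^2)$, which is where the hypothesis $G \in C^3$ together with the prescribed values of $G_0, G_1, G_2$ enters.

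The heart of the proof is to substitute all of these expansions into the second line of \eqref{a3} and expand $e_{n+1} = x_{n+1} - x^*$ as a power series in $e_n$. The motivation for the conditions is transparent: the ideal second step would be a genuine Newton step from $y_n$, namely $y_n - f(y_n)/f'(y_n)$, which already converges with order four, and replacing $f'(y_n)$ by $f'(x_n)$ forces the weight to compensate, i.e.\ one wants $G(t_n) \approx f'(x_n)/f'(y_n)$. Expanding that quotient yields $f'(x_n)/f'(y_n) = 1 + 2c_2 e_n + O(e_n^2) = 1 + 2 t_n + O(t_n^2)$, which explains precisely why $G_0 = 1$ and $G_1 = 2$ must hold.

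With these choices the coefficients of $e_n^2$ and $e_n^3$ in $e_{n+1}$ vanish identically, leaving $e_{n+1} = O(e_n^4)$; because the $G_2$ term multiplies $t_n^2 \sim c_2^2 e_n^2$ against the factor $f(y_n)/f'(x_n) \sim c_2 e_n^2$, it first contributes at order $e_n^4$, so it affects only the leading error constant and not the order, which is why merely $|G_2| < \infty$ is required. The main obstacle is the bookkeeping in this final collection of terms: one must carry every expansion consistently up to order $e_n^4$ and verify the cancellations, which is exactly the step where symbolic computation in Mathematica is genuinely useful.
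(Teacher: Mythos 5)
Your proposal follows essentially the same route as the paper's proof: Taylor expansion of $f(x_n)$, $f'(x_n)$, $f(y_n)$ about $x^*$ in terms of $c_k=\frac{f^{(k)}(x^*)}{k!\,f'(x^*)}$, the Newton-step error $e_{n,y}=c_2e_n^2+O(e_n^3)$, the expansion $t_n=c_2e_n+O(e_n^2)$ justifying $G(t_n)=G_0+G_1t_n+\tfrac12G_2t_n^2+O(t_n^3)$, and substitution into \eqref{a3} so that $G_0=1$, $G_1=2$ kill the $e_n^2$ and $e_n^3$ coefficients while $G_2$ only enters the fourth-order error constant (the paper's $R_4=-c_2c_3+c_2^3(5-\tfrac12G_2)$), which is exactly why $|G_2|<\infty$ suffices. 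Your additional observation that the conditions arise from matching $G(t_n)\approx f'(x_n)/f'(y_n)=1+2t_n+O(t_n^2)$ is correct and nicely motivates the paper's computation, but does not change the argument.
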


\begin{proof}
Let $e_{n}:=x_{n}-x^{*}$, $e_{n,y}:=y_{n}-x^{*}$,
 and
$c_{n}:=\frac{f^{(n)}(x^{*})}{n!f^{'}(x^{*})}$ for $n \in \mathbb{N}$. Using the fact that $f(x^{*})=0$, the Taylor expansion of $f$ at $x^*$ yields
\begin{equation}\label{a4}
f(x_{n}) = f^{'}(x^*)(e_{n} +
c_{2}e_{n}^{2}+c_{3}e_{n}^{3}+c_{4}e_{n}^{4})+O(e_{n}^{5}),
\end{equation}
and expanding $f'$ at $x^*$ we get
\begin{equation}\label{a5}
f^{'}(x_{n})=f^{'}(x^{*})(1+2c_{2}e_{n}+3c_{3}e_{n}^{2}+4c_{4}e_{n}^{3}+5c_{5}e_{n}^{4})+O(e_{n}^{5}).
\end{equation}
Therefore
\begin{equation*}\label{a6}
\dfrac{f(x_{n})}{f'(x_{n})}=e_{n}-c_{2}e_{n}^{2}+2(c_{2}^{2}-c_{3})e_{n}^{3}+(-4c_2^3+7c_2c_3-3c_4)e_n^4+O(e_n^5),
\end{equation*}
and hence
\begin{equation*}\label{a7}
e_{n,y}= y_n-x^*=c_{2}e_{n}^{2}+ O(e_n^3).
\end{equation*}
\\
For $f(y_n)$, we also have
\begin{equation}\label{a8}
f(y_{n})=f^{'}(x^{*})\left(e_{n,y} +
c_{2}e_{n,y}^{2}+c_{3}e_{n,y}^{3}+c_4e_{n,y}^{4}\right)+O(e_{n,y}^{5}).
\end{equation}\\
By (\ref{a4}) and (\ref{a8}), we obtain

\begin{equation}\label{a9}
t_n=\dfrac{f(y_n)}{f(x_n)}\\
=c_2 e_n+(-3c_2^2+2c_3)e_n^2+(8c_2^3-10c_2c_3+3c_4)e_n^3+O(e_n^4),
\end{equation}

and expanding $G$ at $0$ yields

\begin{equation}\label{a10}
G(t_n)=G_{0}+G_{1}t_n+\frac{1}{2}G_{2}t_n^2+O(t_n^3).
\end{equation}

By substituting (\ref{a4})-(\ref{a10})
 into (\ref{a3}), we obtain
\begin{equation*}
e_{n+1}=x_{n+1}-x^*=R_2e_n^2+R_3e_n^3+R_4e_n^4+O(e_n^5),
\end{equation*}
where
\begin{equation*}
\begin{split}
R_2&=-c_2(-1+G_{0}),\\
R_3&=-c_2^2(-2+G_{1}),\\
R_4&=-c_2c_3+c_2^3(5-\frac{1}{2}G_2).
\end{split}
\end{equation*}
\\
In general $R_{4}\neq 0$, however, by setting $R_2 = R_3 = 0$, the
convergence order becomes four. Sufficient conditions are given by
the following set of equations
\begin{equation*}
  \begin{array}{lrl}
    G_{0} =1 & \quad \Rightarrow & R_2 = 0 ,
  \\[1ex]
    G_{1}=2 & \Rightarrow & R_3=0 ,
    \\[1ex]
   |G_{2}|<\infty & \Rightarrow & R_4\neq0 ,
  \end{array}
\end{equation*}
and the error equation
becomes
\begin{equation*}
  e_{n+1}=R_{4}e_n^4+O(e_n^5),
\end{equation*}
which finishes the proof.
\end{proof}
\subsection{Construction of an optimal three-point class}
To increase the order of convergence, we add one Newton step to
the method \eqref{a3} to get
\begin{equation}\label{b1}
\begin{cases}
y_n=x_n-\dfrac{f(x_n)}{f'(x_n)},\\
z_{n}=y_n-G(t_n)\cdot\dfrac{f(y_n)}{f'(x_n)},\\
x_{n+1}=z_n-\dfrac{f(z_n)}{f'(z_n)}.
\end{cases}
\end{equation}
The method \eqref{b1} uses five function evaluations and therefore the
method is not an optimal three-point method. We modify \eqref{b1}
by approximating $f'(z_n)$ by
\begin{equation*}
  f'(z_n) \approx \frac{f'(x_n)}{H(t_n, s_n, u_n)},
\end{equation*}
using only the values $f(x_n)$, $f(y_n)$, $f(z_n)$ and $f'(x_n)$.
More precisely, we use the abbreviations
$t_n=\frac{f(y_n)}{f(x_n)}$, $s_n=\frac{f(z_n)}{f(y_n)}$,
$u_n=\frac{f(z_n)}{f(x_n)}$ and utilize Mathematica \cite{Hazrat}
to carefully choose the weight functions $H :
\mathbb{R}^3\rightarrow \mathbb{R}$ from a class of admissible
functions such that the scheme
\begin{equation}\label{b2}
\begin{cases}
y_n=x_n-\dfrac{f(x_n)}{f'(x_n)},\\
z_{n}=y_n- G(t_n) \cdot\dfrac{f(y_n)}{f'(x_n)},\\
x_{n+1}=z_n-H(t_n, s_n, u_n)\cdot\dfrac{f(z_n)}{f'(x_n)},
\end{cases}
\end{equation}
is of order eight.

\begin{thm}\label{theorem2}
Assume that $ f: D \subset \mathbb{R} \rightarrow
\mathbb{R}$ is eight times continuously differentiable and has a simple
zero $x^{*}\in D$ and $G\in C^3\mathbb{R}$ and
$H:\mathbb{R}^3\rightarrow \mathbb{R}$ are
sufficiently often differentiable functions. If
\begin{equation*}
  \begin{split}
   & G_{2} =10, \quad G_{3}=-36,\\
   & H_{0, 0, 0}=1, \quad H_{1, 0 ,0}=2, \quad H_{0, 1, 0}=1,\\
   & H_{2,0,0}=12, \quad H_{0,0,1}=4, \quad H_{1,1,0}=0,
    \end{split}
\end{equation*}
where $H_{i,j,k}=\frac{\partial^{i+j+k}H(t_n,s_n,u_n)}{\partial
t_n^i\partial s_n^j\partial u_n^k}|_{(0,0,0)}$ for
$i,j,k=0,1,2,3,\ldots$, and the initial point $x_{0}$ is
sufficiently close to $x^{*}$, then the sequence $\{x_n\}$ defined
by \eqref{b2} converges to $x^{*}$ and the order of
convergence is eight.
\end{thm}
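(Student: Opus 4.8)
The plan is to extend the Taylor-expansion bookkeeping from the proof of Theorem~\ref{theorem1}. This is a standard ``plug everything into the scheme and kill the low-order error coefficients'' computation, carried out to one more step. I would reuse the same notation: $e_n = x_n - x^*$, $c_n = f^{(n)}(x^*)/(n!\,f'(x^*))$, and the abbreviations $e_{n,y}=y_n-x^*$ and $e_{n,z}=z_n-x^*$.

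First I would record the two-point part. By Theorem~\ref{theorem1} (with $G_0=1$, $G_1=2$ already imposed) the second substep produces
\begin{equation*}
  e_{n,z} = z_n - x^* = R_4 e_n^4 + O(e_n^5),
  \qquad R_4 = -c_2 c_3 + c_2^3\bigl(5 - \tfrac12 G_2\bigr).
\end{equation*}
Here I expect the extra normalization $G_2 = 10$ to enter: it forces the coefficient $5-\tfrac12 G_2$ to vanish, so that $e_{n,z}$ begins at order $e_n^4$ with a cleaner leading term (and $G_3=-36$ will be needed to control the $e_n^5$ coefficient of $z_n$, which feeds into the order-eight cancellation). Next I would Taylor-expand $f(z_n)$ about $x^*$ using \eqref{a8}-style expansion with $e_{n,z}$ in place of $e_{n,y}$, and then expand the three arguments $t_n=f(y_n)/f(x_n)$, $s_n=f(z_n)/f(y_n)$, $u_n=f(z_n)/f(x_n)$ as power series in $e_n$. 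Since $e_{n,z}=O(e_n^4)$, both $s_n$ and $u_n$ are $O(e_n^3)$ while $t_n=O(e_n)$, which determines exactly which partial derivatives of $H$ survive to the orders I must cancel.

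Then I would expand the weight function $H$ as a multivariate Taylor polynomial around $(0,0,0)$,
\begin{equation*}
  H(t_n,s_n,u_n) = \sum_{i+j+k \le 3}
  \frac{H_{i,j,k}}{i!\,j!\,k!}\, t_n^i s_n^j u_n^k + \cdots,
\end{equation*}
substitute all of this into the third substep $x_{n+1}=z_n - H\cdot f(z_n)/f'(x_n)$, and collect the error $e_{n+1}$ as a series $e_{n+1}=\sum_{m\ge 2} P_m e_n^m + O(e_n^9)$ with each coefficient $P_m$ a polynomial in the $c_j$, $G_i$, and $H_{i,j,k}$. The goal is to choose the free parameters so that $P_2=\dots=P_8=0$, leaving $e_{n+1}=P_9 e_n^9 + O(e_n^{10})$, i.e.\ order eight. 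I would show that the stated conditions $H_{0,0,0}=1$, $H_{1,0,0}=2$, $H_{0,1,0}=1$, $H_{2,0,0}=12$, $H_{0,0,1}=4$, $H_{1,1,0}=0$ (together with the inherited $G$-values) are precisely what makes these coefficients vanish, reading them off one order at a time.

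The main obstacle is purely organizational rather than conceptual: the order-eight coefficient $P_8$ is an unwieldy polynomial in $c_2,\dots,c_8$ and in the several $H_{i,j,k}$, and verifying that the six prescribed $H$-conditions annihilate it by hand is error-prone. This is exactly why the authors invoke Mathematica \cite{Hazrat} to generate and simplify the series. I would structure the argument to match the two-point proof: present the intermediate expansions of $t_n,s_n,u_n$ and $f(z_n)$, state the resulting $P_m$ coefficients, and then display the table of conditions ``$H_{i,j,k}=\text{value}\Rightarrow P_m=0$'', concluding with the final error equation $e_{n+1}=P_9 e_n^9+O(e_n^{10})$ and hence convergence order eight.
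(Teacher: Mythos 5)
Your overall strategy---Taylor-expand everything, write $e_{n+1}$ as a power series in $e_n$, and read off conditions on the weight-function derivatives order by order, with computer algebra doing the bookkeeping---is exactly the paper's route. But your stated target is off by one, and as written the plan fails: you propose to force $P_2=\dots=P_8=0$ and conclude ``order eight'' from $e_{n+1}=P_9e_n^9+O(e_n^{10})$. An error equation beginning at $e_n^9$ means order \emph{nine}, and with only four function evaluations per step in \eqref{b2} this is unattainable: the Kung--Traub bound caps the method at $2^3=8$, and indeed the paper's computation shows that the order-eight coefficient is $R_8=c_2c_3\left(12c_2^4+14c_2^2c_3-c_3^2-c_2c_4\right)$, which contains no free parameters at all---the listed conditions on $G$ and $H$ suffice only to annihilate $R_4,R_5,R_6,R_7$. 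The correct target is $R_4=R_5=R_6=R_7=0$ with $R_8\neq0$ in general, giving $e_{n+1}=R_8e_n^8+O(e_n^9)$, i.e.\ order eight. Had you pushed your plan through, the computation would jam at order eight: no admissible choice of the $H_{i,j,k}$ kills $P_8$. (Note also that with $G_0=1$, $G_1=2$ inherited from Theorem~\ref{theorem1}, the series for $e_{n+1}$ already begins at $e_n^4$, so ``$P_2=P_3=0$'' is automatic and not something the new conditions need to buy.)

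A smaller but consequential inaccuracy: you claim that $s_n=f(z_n)/f(y_n)$ and $u_n=f(z_n)/f(x_n)$ are both $O(e_n^3)$. Since $f(z_n)=O(e_n^4)$ while $f(y_n)=O(e_n^2)$, in fact $s_n=-c_3e_n^2+O(e_n^3)$, whereas $u_n=-c_2c_3e_n^3+O(e_n^4)$, matching the paper's expansions. This matters precisely for the bookkeeping you emphasize, namely which partials of $H$ survive at each order: for instance $H_{0,1,0}$ enters at order six through $s_n\cdot e_{n,z}$, and misassigning $s_n$ the order $e_n^3$ would shift it to order seven and scramble the matching of conditions ($H_{0,1,0}=1$, $H_{2,0,0}=12$ for $R_6$; $G_3=-36$, $H_{0,0,1}=4$, $H_{1,1,0}=0$ for $R_7$) to the coefficients they are meant to cancel. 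With the target order corrected and the orders of $s_n,u_n$ fixed, the rest of your outline coincides with the paper's proof.
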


\begin{proof}
Let $e_{n}:=x_{n}-x^{*}$, $e_{n,y}:=y_{n}-x^{*}$,
$e_{n,z}:=z_{n}-x^{*}$
 and
$c_{n}:=\frac{f^{(n)}(x^{*})}{n!f^{'}(x^{*})}$ for $n \in
\mathbb{N}$. Using the fact that $f(x^{*})=0$, the Taylor
expansion of $f$ at $x^*$ yields
\begin{equation}\label{b3}
f(x_{n}) = f^{'}(x^*)(e_{n} +
c_{2}e_{n}^{2}+c_{3}e_{n}^{3}+\ldots+c_{8}e_{n}^{8})+O(e_{n}^{9}),
\end{equation}
and expanding $f'$ at $x^*$ we get
\begin{equation}\label{b4}
f^{'}(x_{n})=f^{'}(x^{*})(1+2c_{2}e_{n}+3c_{3}e_{n}^{2}+\ldots+9c_{9}e_{n}^{8})+O(e_{n}^{9}).
\end{equation}
Therefore
\begin{equation*}\label{c5}
\begin{split}
\dfrac{f(x_{n})}{f'(x_{n})}&=e_{n}-c_{2}e_{n}^{2}+2(c_{2}^{2}-c_{3})e_{n}^{3}+(-4c_2^3+7c_2c_3-3c_4)e_n^4+(8c_2^4-20c_2^2c_3+6c_3^2\\
&+10c_2c_4-4c_5)e_n^5+(-16c_2^5+52c_2^3c_3-28c_2^2c_4+17c_3c_4+c_2(-33c_3^2+13c_5))e_n^6\\
&+2(16c_2^6-64c_2^4c_3-9c_3^3+36c_2^3c_4-46c_2c_3c_4+6c_4^2+9c_2^2(7c_3^2-2c_5)+11c_3c_5)e_n^7\\
&+(-64c_2^7+304c_2^5c_3-176c_2^4c_4+348c_2^2c_3c_4+c_4(-75c_3^2+31c_5)\\
&+c_2^3(-408c_3^2+92c_5)+c_2(135c_3^3-64c_4^2-118c_3c_5))e_n^8+O(e_n^9)\\
\end{split}
\end{equation*}
and hence
\begin{equation*}\label{c6}
\begin{split}
e_{n,y}=y_n-x^*=
&c_{2}e_{n}^{2}-2(c_{2}^{2}-c_{3})e_{n}^{3}+(4c_2^3-7c_2c_3+3c_4)e_n^4-(8c_2^4-20c_2^2c_3+6c_3^2+10c_2c_4-4c_5)e_n^5\\
&-(-16c_2^5+52c_2^3c_3-28c_2^2c_4+17c_3c_4+c_2(-33c_3^2+13c_5))e_n^6\\
&-2(16c_2^6-64c_2^4c_3-9c_3^3+36c_2^3c_4-46c_2c_3c_4+6c_4^2+9c_2^2(7c_3^2-2c_5)+11c_3c_5)e_n^7\\
&-(-64c_2^7+304c_2^5c_3-176c_2^4c_4+348c_2^2c_3c_4+c_4(-75c_3^2+31c_5)\\
&+c_2^3(-408c_3^2+92c_5)+c_2(135c_3^3-64c_4^2-118c_3c_5))e_n^8+O(e_n^9)\\
\end{split}
\end{equation*}

For $f(y_n)$, we also have
\begin{equation}\label{b7}
f(y_{n})=f^{'}(x^{*})\left(e_{n,y} +
c_{2}e_{n,y}^{2}+c_{3}e_{n,y}^{3}+\ldots+c_8e_{n,y}^{8}\right)+O(e_{n,y}^{9}).
\end{equation}

According to the proof of Theorem \ref{theorem1}, we have
\begin{equation*}
\begin{split}
e_{n,z}=z_n-x^{*}&=(-c_2c_3)e_{n}^{4}+(20c_2^4+2c_2^2c_3-2c_3^2-2c_2c_4)e_n^5+(-218c_2^5+156c_2^3c_3\\
&+3c_2^2c_4-7c_3c_4+c_2(6c_3^2-3c_5))e_n^6+2(730c_2^6-1006c_2^4c_3+2c_3^3\\
&+118c_2^3c_4+8c_2c_3c_4-3c_4^2-5c_3c_5+2c_2^2(115c_3^2-c_5))e_n^7\\
&+(-7705c_2^7+15424c_2^5c_3-2946c_2^4c_4+1393c_2^2c_3c_4+c_4(14c_3^2-17c_5)\\
&+35c_2^3(-211c_3^2+9c_5)+5c_2(121c_3^3+2c_4^2+4c_3c_5))e_n^8+O(e_n^9).
\end{split}
\end{equation*}

Also for $f(z_n)$, we get

\begin{equation}\label{b8}
f(z_{n})=f^{'}(x^{*})\left(e_{n,z} +
c_{2}e_{n,z}^{2}+c_{3}e_{n,z}^{3}+\ldots+c_8e_{n,z}^{8}\right)+O(e_{n,z}^{9}).
\end{equation}

By (\ref{b3}) and (\ref{b7}), we obtain
\begin{equation}\label{c8}
\begin{split}
t_n=\frac{f(y_n)}{f(x_n)}&=c_2
e_n+(-3c_2^2+2c_3)e_n^2+(8c_2^3-10c_2c_3+3c_4)e_n^3+(-20c_2^4+37c_2^2c_3\\
&-8c_3^2-14c_2c_4+4c_5)e_n^4+(48c_2^5-118c_2^3c_3+51c_2^2c_4-22c_3c_4+c_2(55c_3^2\\
&-18c_5))e_n^5+(-112c_2^6+344c_2^4c_3+26c_3^3-163c_2^3c_4+150c_2c_3c_4-15c_4^2\\
&-28c_3c_5+c_2^2(-252c_3^2-65c_5))e_n^6+O(e_n^7).
\end{split}
\end{equation}
\\
By (\ref{b7}) and (\ref{b8}), we obtain
\begin{equation}\label{c9}
\begin{split}
s_n=\frac{f(z_n)}{f(y_n)}&=-c_3e_n^2+(20c_2^3-2c_4)e_n^3+(-178c_2^4+121c_2^2c_3-c_3^2-c_2c_4)e_n^4+(1004c_2^5\\
&-1286c_2^3c_3+184c_2^2c_4-6c_3c_4+c_2(240c_3^2-2c_5))e_n^5+O(e_n^6).
\end{split}
\end{equation}
\\
By (\ref{b3}) and (\ref{b8}), we get
\begin{equation}\label{c10}
\begin{split}
u_n=\frac{f(z_n)}{f(x_n)}&=-c_2c_3e_n^3+(20c_2^4+3c_2^2c_3-2c_3^2-2c_2c_4)e_n^4+(-238c_2^5+153c_2^3c_3+5c_2^2c_4\\
&-7c_3c_4+c_2(9c_3^2-3c_5))e_n^5+O(e_n^6).
\end{split}
\end{equation}

Expanding $H$ at $(0,0,0)$ yields
\begin{equation}\label{b12}
\begin{split}
H(t_n,s_n,u_n)&=H_{0,0,0}+t_nH_{1,0,0}+s_nH_{0,1,0}+u_nH_{0,0,1}+t_ns_nH_{1,1,0}+t_nu_nH_{1,0,1}\\
&+s_nu_nH_{0,1,1}+t_ns_nu_nH_{1,1,1}+\frac{t_n^2}{2}H_{2,0,0}+O(t_n^3,
s_n^2, u_n^2).
\end{split}
\end{equation}

By substituting (\ref{b3})-(\ref{b12})
 into (\ref{b2}), we obtain
\begin{equation*}
e_{n+1}=x_{n+1}-x^*=R_4e_n^4+R_5e_n^5+R_6e_n^6+R_7e_n^7+R_8e_n^8+O(e_n^9),
\end{equation*}
where
\begin{equation*}
\begin{split}
R_4&=\frac{1}{2}c_2\left(2c_3+c_2^2(-10+G_2)\right)(-1+H_{0,0,0}),\\
R_5&=c_2^2c_3\left(-2+H_{1,0,0}\right),\\
R_6&=\frac{1}{2}c_2c_3\left(-2c_3\left(-1+H_{0,1,0}\right)+c_2^2\left(-12+H_{2,0,0}\right)\right),\\
R_7&=\frac{-1}{6}c_2^2c_3\left(c_2^2\left(36+G_3\right)+6c_3\left(-4+H_{0,0,1}+H_{1,1,0}\right)\right),\\
R_8&=c_2c_3\left(12c_2^4+14c_2^2c_3-c_3^2-c_2c_4\right).
\end{split}
\end{equation*}

Clearly $R_{8}\neq 0$, by setting $R_4 = R_5 =R_6 = R_7 = 0$, the
convergence order becomes eight. Sufficient conditions are given
by the following set of equations
\begin{equation*}
  \begin{array}{lrl}
    H_{0,0,0}=1, \quad G_2=10, & \quad \Rightarrow & R_4 = 0 ,
  \\[1ex]
    H_{1,0,0}=2, & \Rightarrow & R_5=0 ,
  \\[1ex]
   H_{0,1,0}=1, \quad H_{2,0,0}=12, & \quad \Rightarrow & R_6 = 0 ,
  \\[1ex]
    G_3=-36, \quad H_{0,0,1}=4, \quad H_{1,1,0}=0, \quad  & \Rightarrow & R_7=0 ,
  \end{array}
\end{equation*}
and the error equation becomes
\begin{equation*}
  e_{n+1}=R_{8}e_n^8+O(e_n^9),
\end{equation*}
which finishes the proof.
\end{proof}
\subsection{Main contribution:\\
Construction of an optimal four-point class}

This section contains the main contribution. To
increase the order of convergence, we add one Newton's step to the
method \eqref{b2} to get
\begin{equation}\label{c1}
\begin{cases}
y_n=x_n-\dfrac{f(x_n)}{f'(x_n)},\\
z_{n}=y_n-G(t_n)\cdot\dfrac{f(y_n)}{f'(x_n)},\\
w_{n}=z_n-H(t_n,s_n,u_n)\cdot\dfrac{f(z_n)}{f'(x_n)},\\
x_{n+1}=w_n-\dfrac{f(w_n)}{f'(w_n)}.
\end{cases}
\end{equation}
The method \eqref{c1} uses six function evaluations therefore the
method is not an optimal four-point method. We modify \eqref{c1}
by approximating $f'(w_n)$ by
\begin{equation*}
  f'(w_n) \approx \frac{f'(x_n)}{I(t_n)+J(s_n)+K(u_n)+L(t_n,u_n)+M(p_n,q_n,r_n)+N(t_n,s_n,u_n,r_n)},
\end{equation*}
using only the values $f(x_n)$, $f(y_n)$, $f(z_n)$, $f(w_n)$ and
$f'(x_n)$. More precisely, we use the abbreviations
$t_n=\frac{f(y_n)}{f(x_n)}$, $s_n=\frac{f(z_n)}{f(y_n)}$,
$u_n=\frac{f(z_n)}{f(x_n)}$, $p_n=\frac{f(w_n)}{f(x_n)}$,
$q_n=\frac{f(w_n)}{f(y_n)}$, $r_n=\frac{f(w_n)}{f(z_n)}$ and
utilize Mathematica \cite{Hazrat} to carefully choose the weight
functions $I,J,K:\mathbb{R}\rightarrow \mathbb{R}$,
$L:\mathbb{R}^2\rightarrow\mathbb{R}$,
$M:\mathbb{R}^3\rightarrow\mathbb{R}$ and $N:\mathbb{R}^4
\rightarrow \mathbb{R}$ from a class of admissible functions such
that the scheme
\begin{equation}\label{c2}
\begin{cases}
y_n=x_n-\dfrac{f(x_n)}{f'(x_n)},\\
z_{n}=y_n- G(t_n) \cdot\dfrac{f(y_n)}{f'(x_n)},\\
w_{n}=z_n-H(t_n, s_n, u_n)\cdot\dfrac{f(z_n)}{f'(x_n)},\\
x_{n+1}=w_n-\left(I(t_n)+J(s_n)+K(u_n)+L(t_n,u_n)+M(p_n,q_n,r_n)+N(t_n,s_n,u_n,r_n)\right)\cdot\dfrac{f(w_n)}{f'(x_n)},
\end{cases}
\end{equation}
is of order $16$.

\begin{thm}\label{theorem3}
Assume that $f: D \subset \mathbb{R} \rightarrow
\mathbb{R}$ is $16$ times continuously differentiable and has a simple zero
$x^{*}\in D$ and let $I,J,K:\mathbb{R}\rightarrow \mathbb{R}$,
$L:\mathbb{R}^2\rightarrow\mathbb{R}$,
$M:\mathbb{R}^3\rightarrow\mathbb{R}$ and $N:\mathbb{R}^4
\rightarrow \mathbb{R}$ be
sufficiently  differentiable functions. If
\begin{equation*}
  \begin{split}
   &I_{0}=0,\quad I_{1}=2,\quad I_{2}=12,\\
   &J_{0}=0,\quad J_{1}=1,\quad J_{2}=0,\quad J_{3}=-6\\
   &K_{0}=1,\quad K_{1}=4,\quad K_{2}=-8,\\
   &L_{0,0}=0,\quad L_{1,0}=0,\quad L_{1,1}=1,\quad L_{2,0}=0,\quad L_{0,1}=0,\\
   &L_{3,0}=0, \quad L_{2,1}=12, \quad L_{3,1}=12, \quad L_{0,2}=0, \quad L_{1,2}=-20,\\
   &H_{0,1,1}=0, \quad H_{1,1,1}=0,\\
   &M_{0,0,0}=0,\quad M_{1,0,0}=8, \quad M_{0,1,0}=2, \quad M_{0,0,1}=1,\\
   &N_{0,0,0,0}=N_{1,0,0,0}=N_{0,1,0,0}=N_{0,0,1,0}=N_{0,0,0,1}=N_{2,0,0,0}=N_{1,1,0,0}\\
   &\quad \quad \quad =N_{0,2,0,0}=N_{1,0,1,0}=N_{2,1,0,0}=N_{3,1,0,0}=N_{2,2,0,0}=N_{0,1,0,1}\\
   &\quad \quad \quad =N_{3,0,0,0}=N_{4,0,0,0}=N_{3,0,1,0}=N_{1,1,0,1}=N_{2,1,1,0}=N_{3,0,0,1}\\
   &\quad \quad \quad =N_{0,0,1,1}=N_{3,2,0,0}=N_{4,1,0,0}=N_{1,2,0,0}=N_{2,0,1,0}=N_{1,1,1,0}=0,\\
   & N_{1,0,0,1}=2,\quad N_{2,0,0,1}=12, \quad N_{0,2,1,0}=-8,\quad N_{4,0,1,0}=576, \quad N_{0,1,1,0}=2.\\
  \end{split}
\end{equation*}
where $I_{i}=\frac{d^{i}I(t_n)}{d t_n^i}|_{0}$,
$J_{i}=\frac{d^{i}J(s_n)}{d s_n^i}|_{0}$ and
$K_{i}=\frac{d^{i}K(u_n)}{d u_n^i}|_{0}$ ,
$L_{i,j}=\frac{\partial^{i+j}L(t_n,u_n)}{\partial t_n^i\partial
u_n^j}|_{(0,0)}$,
$M_{i,j,k}=\frac{\partial^{i+j+k}M(p_n,q_n,r_n)}{\partial
p_n^i\partial q_n^j\partial r_n^k}|_{(0,0,0)}$  and
$N_{i,j,k,l}=\frac{\partial^{i+j+k+l}N(t_n,s_n,u_n,r_n)}{\partial
t_n^i\partial s_n^j\partial u_n^k\partial r_n^l}|_{(0,0,0,0)}$ for
$i,j,k,l=0,1,2,3,\ldots$, and the initial point $x_{0}$ is
sufficiently close to $x^{*}$, then the sequence $\{x_n\}$ defined
by \eqref{c2} converges to $x^{*}$ and the order of convergence is
$16$.
\end{thm}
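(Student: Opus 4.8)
My plan is to carry the error-propagation argument of Theorems~\ref{theorem1} and \ref{theorem2} one step further, exploiting the fact that the first three substeps of \eqref{c2} are literally the three-point scheme \eqref{b2}. I would set $e_n := x_n - x^*$, $e_{n,y} := y_n - x^*$, $e_{n,z} := z_n - x^*$, $e_{n,w} := w_n - x^*$ and $c_k := f^{(k)}(x^*)/(k!\,f'(x^*))$, and expand $f$ and $f'$ at $x^*$ through order $16$. Since the hypotheses of Theorem~\ref{theorem3} contain all the conditions on $G$ and $H$ demanded by Theorem~\ref{theorem2}, I may invoke Theorem~\ref{theorem2} to conclude that $e_{n,w} = O(e_n^8)$. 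Because the present analysis reaches order $16$, I would first extend the expansion of $e_{n,w}$ beyond its leading $e_n^8$ term (the only term recorded in the proof of Theorem~\ref{theorem2}) to the order needed below, which is where the additional conditions $H_{0,1,1}=0$ and $H_{1,1,1}=0$ enter, since they pin down higher-order coefficients of $H$ that are irrelevant at order $8$ but not beyond.

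The conceptual reason that the order doubles from $8$ to $16$ is that the last substep is a disguised Newton step. Writing $W := I(t_n)+J(s_n)+K(u_n)+L(t_n,u_n)+M(p_n,q_n,r_n)+N(t_n,s_n,u_n,r_n)$, the fourth line of \eqref{c2} gives
\begin{equation*}
e_{n+1} = e_{n,w} - W\,\frac{f(w_n)}{f'(x_n)} = e_{n,w}\left(1 - W\,\frac{f(w_n)}{e_{n,w}\,f'(x_n)}\right).
\end{equation*}
Because $f(w_n) = f'(x^*)\,e_{n,w}\bigl(1 + c_2 e_{n,w} + \cdots\bigr)$ with $e_{n,w}=O(e_n^8)$, the ratio $f(w_n)/(e_{n,w} f'(x_n))$ equals $f'(x^*)/f'(x_n) + O(e_n^8)$, so the design goal is simply to force $W = f'(x_n)/f'(x^*) + O(e_n^8)$, i.e.\ to make $W$ reproduce the Taylor coefficients of $f'(x_n)/f'(x^*)$ through order $e_n^7$. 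Once this holds the parenthesised factor is $O(e_n^8)$, and multiplying by $e_{n,w}=O(e_n^8)$ yields $e_{n+1}=O(e_n^{16})$; in the residual notation of the previous proofs this is exactly the statement that $R_8=\cdots=R_{15}=0$, with $R_{16}$ the first surviving coefficient.

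To execute the matching I would record the orders of the six ratio variables, namely $t_n=O(e_n)$, $s_n=O(e_n^2)$, $u_n=O(e_n^3)$, $r_n=O(e_n^4)$, $q_n=O(e_n^6)$ and $p_n=O(e_n^7)$, which follow from $f(y_n)=O(e_n^2)$, $f(z_n)=O(e_n^4)$ and $f(w_n)=O(e_n^8)$. These widely separated orders are what keeps the problem finite: only those partial derivatives of $I,J,K,L,M,N$ whose associated monomial in $t_n,\ldots,r_n$ has order at most $7$ can affect $W$ below order $e_n^8$, and this finite list is precisely the collection $I_i$, $J_i$, $K_i$, $L_{i,j}$, $M_{i,j,k}$, $N_{i,j,k,l}$ named in the theorem; all higher derivatives are killed by powers of $e_n$ exceeding $15$. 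Expanding each weight function about its base point, substituting the expansions of $t_n,\ldots,r_n$ (which themselves require $e_{n,w}$ to the order fixed in the first paragraph), and equating the coefficient of each $c$-monomial at each order $e_n^k$, $0\le k\le 7$, to that of $f'(x_n)/f'(x^*)$, produces a linear system in the unknown weight-function coefficients whose solution is the displayed list; the error equation then collapses to $e_{n+1}=R_{16}e_n^{16}+O(e_n^{17})$.

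The main obstacle is not conceptual but the sheer scale of the symbolic algebra. At order $e_n^7$ a large number of distinct $c$-monomials occur, and many high mixed partials of the four-variable function $N$ (for instance the term $N_{4,0,1,0}\,t_n^4 u_n$ of order $e_n^7$) and of $M$ contribute simultaneously, so each residual is a bulky polynomial; one must verify both that the resulting linear system is consistent and that the prescribed values annihilate every intermediate term rather than only the leading one. I would organise the computation by ascending order in $e_n$, solving for one batch of coefficients at a time so that each residual becomes linear in the unknowns still to be determined, and carry out this bookkeeping in a computer algebra system such as Mathematica, as the authors indicate.
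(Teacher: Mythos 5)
Your proposal is correct and follows essentially the same route as the paper: expand everything in Taylor series about $x^*$, reuse the expansions of $e_{n,y}$, $e_{n,z}$, $e_{n,w}$ from Theorems~\ref{theorem1} and \ref{theorem2} (correctly noting that $e_{n,w}$ must be extended past its leading $e_n^8$ term, which is where $H_{0,1,1}$ and $H_{1,1,1}$ enter), and then annihilate the residuals $R_8,\dots,R_{15}$ so that $e_{n+1}=R_{16}e_n^{16}+O(e_n^{17})$ — exactly the Mathematica-assisted computation recorded in the paper's proof. Your reframing of the conditions as forcing $W=I+J+K+L+M+N$ to match $f'(x_n)/f'(x^*)$ through order $e_n^7$ (with the order bookkeeping $t_n=O(e_n)$, $s_n=O(e_n^2)$, $u_n=O(e_n^3)$, $r_n=O(e_n^4)$, $q_n=O(e_n^6)$, $p_n=O(e_n^7)$) is an equivalent and slightly more illuminating packaging of the same linear conditions, with the only minor caveat that the displayed coefficient values are one sufficient solution of that (underdetermined) system rather than its unique solution.
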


\begin{proof}
Let $e_{n}:=x_{n}-x^{*}$, $e_{n,y}:=y_{n}-x^{*}$,
$e_{n,z}:=z_{n}-x^{*}$, $e_{n,w}:=w_{n}-x^{*}$
 and
$c_{n}:=\frac{f^{(n)}(x^{*})}{n!f^{'}(x^{*})}$ for $n \in
\mathbb{N}$. Using the fact that $f(x^{*})=0$,  Taylor's
expansion of $f$ at $x^*$ yields
\begin{equation}\label{c3}
f(x_{n})=f^{'}(x^*)(e_{n} +
c_{2}e_{n}^{2}+c_{3}e_{n}^{3}+\ldots+c_{16}e_{n}^{16})+O(e_{n}^{17}),
\end{equation}
and expanding $f'$ at $x^*$ we get
\begin{equation}\label{c4}
f^{'}(x_{n})=f^{'}(x^{*})(1+2c_{2}e_{n}+3c_{3}e_{n}^{2}+\ldots+17c_{17}e_{n}^{16})+O(e_{n}^{17}).
\end{equation}
Therefore
\begin{equation*}\label{c5}
\begin{split}
\dfrac{f(x_{n})}{f'(x_{n})}&=e_{n}-c_{2}e_{n}^{2}+2(c_{2}^{2}-c_{3})e_{n}^{3}+(-4c_2^3+7c_2c_3-3c_4)e_n^4+(8c_2^4-20c_2^2c_3+6c_3^2\\
&+10c_2c_4-4c_5)e_n^5+2(-16c_2^5+52c_2^3c_3-28c_2^2c_4+17c_3c_4+c_2(-33c_3^2+13c_5))e_n^6\\
&+\ldots+(-64c_2^7+304c_2^5c_3-176c_2^4c_4+348c_2^2c_3c_4+c_2^3(-408c_3^2+92c_5)\\
&+c_4(-75c_3^2+31c_5)+c_2(135c_3^3-64c_4^2-118c_3c_5))e_n^8+O(e_n^9),
\end{split}
\end{equation*}
so,
\begin{equation*}\label{c6}
\begin{split}
e_{n,y}=
y_n-x^*&=c_{2}e_{n}^{2}+(-2c_2^2+2c_3)e_n^3+(4c_2^3-7c_2c_3+3c_4)e_n^4+(-8c_2^4+20c_2^2c_3-6c_3^2\\
&-10c_2c_4+4c_5)e_n^5+2(16c_2^5-52c_2^3c_3+28c_2^2c_4-17c_3c_4+c_2(33c_3^2-13c_5))e_n^6\\
&+\ldots+(64c_2^7-304c_2^5c_3+176c_2^4c_4-348c_2^2c_3c_4+c_2^3(408c_3^2-92c_5)\\
&+c_4(75c_3^2-31c_5)+c_2(-135c_3^3+64c_4^2+118c_3c_5))e_n^8+O(e_n^9).
\end{split}
\end{equation*}

For $f(y_n)$, we also have
\begin{equation}\label{c7}
f(y_{n})=f^{'}(x^{*})\left(e_{n,y} +
c_{2}e_{n,y}^{2}+c_{3}e_{n,y}^{3}+\ldots+c_{16}e_{n,y}^{16}\right)+O(e_{n,y}^{17}).
\end{equation}

According to the proof of Theorem \ref{theorem2}, we have
\begin{equation*}
\begin{split}
e_{n,z}=z_n-x^{*}&=(-c_2c_3)e_{n}^{4}+(20c_2^4+2c_2^2c_3-2c_3^2-2c_2c_4)e_n^5+(-218c_2^5+156c_2^3c_3\\
&+3c_2^2c_4-7c_3c_4+c_2(6c_3^2-3c_5))e_n^6+2(730c_2^6-1006c_2^4c_3+2c_3^3\\
&+118c_2^3c_4+8c_2c_3c_4-3c_4^2-5c_3c_5+2c_2^2(115c_3^2-c_5))e_n^7\\
&+(-7705c_2^7+15424c_2^5c_3-2946c_2^4c_4+1393c_2^2c_3c_4+c_4(14c_3^2-17c_5)\\
&+35c_2^3(-211c_3^2+9c_5)+5c_2(121c_3^3+2c_4^2+4c_3c_5))e_n^8+O(e_n^9).
\end{split}
\end{equation*}

For $f(z_n)$, we also get
\begin{equation}\label{cc7}
f(z_{n})=f^{'}(x^{*})\left(e_{n,z} +
c_{2}e_{n,z}^{2}+c_{3}e_{n,z}^{3}+\ldots+c_{16}e_{n,z}^{16}\right)+O(e_{n,z}^{17}).
\end{equation}

By (\ref{c3}) and (\ref{c7}), we obtain
\begin{equation}\label{c8}
\begin{split}
t_n=\frac{f(y_n)}{f(x_n)}&=c_2
e_n+(-3c_2^2+2c_3)e_n^2+(8c_2^3-10c_2c_3+3c_4)e_n^3+(-20c_2^4+37c_2^2c_3\\
&-8c_3^2-14c_2c_4+4c_5)e_n^4+(48c_2^5-118c_2^3c_3+51c_2^2c_4-22c_3c_4+c_2(55c_3^2\\
&-18c_5))e_n^5+(-112c_2^6+344c_2^4c_3+26c_3^3-163c_2^3c_4+150c_2c_3c_4-15c_4^2\\
&-28c_3c_5+c_2^2(-252c_3^2+65c_5))e_n^6+(256c_2^7-944c_2^5c_3+480c_2^4c_4-693c_2^2c_3c_4\\
&+c_2^3(952c_3^2-207c_5)+c_4(105c_3^2-38c_5)+2c_2(-114c_3^3+51c_4^2+95c_3c_5))e_n^7\\
&+(-576c_2^8+2480c_2^6c_3-1336c_2^5c_4+2660c_2^3c_3c_4+6c_2c_4(-156c_3^2+43c_5)\\
&+c_2^4(-3200c_3^2+607c_5)+3c_2^2(418c_3^3-159c_4^2-292c_3c_5)+3(-24c_3^4+47c_3c_4^2\\
&+44c_3^2c_5-8c_5))e_n^8+O(e_n^9).
\end{split}
\end{equation}

 By (\ref{c7}) and
(\ref{cc7}), we obtain
\begin{equation}\label{c9}
\begin{split}
s_n=\frac{f(z_n)}{f(y_n)}&=-c_3e_n^2+(20c_2^3-2c_4)e_n^3+(-178c_2^4+121c_2^2c_3-c_3^2-c_2c_4-3c_5)e_n^4+(1004c_2^5\\
&-1286c_2^3c_3+184c_2^2c_4-6c_3c_4+c_2(240c_3^2-2c_5))e_n^5+(-4567c_2^6+8541c_2^4c_3\\
&+155c_3^3+1863c_2^3c_4+725c_2c_3c_4-7c_4^2-10c_3c_5+\frac{1}{2}c_2^2(-6866c_3^2+492c_5))e_n^6\\
&+\frac{1}{6}(109500c_2^7-269472c_2^5c_3+72756c_2^4c_4-59376c_2^2c_3c_4+c_2^3(172248c_3^2-14616c_5)\\
&+12c_4(348c_3^2-11c_5)+c_2(-24048c_3^3+3276c_4^2+5796c_3c_5))e_n^7+(-66359c_2^8\\
&+204034c_2^6c_3-1725c_3^4-63159c_2^5c_4+81203c_2^3c_3c_4+1038c_3c_4^2+923c_3^2c_5-17c_5^2\\
&+c_2c_4(-17234c_3^2+1453c_5)+c_2^4(-181979c_3^2+15700c_5)+2c_2^2(23804c_3^3-3559c_4^2\\
&-6452c_3c_5))e_n^8+O(e_n^9).
\end{split}
\end{equation}

By (\ref{c3}) and (\ref{cc7}), we have
\begin{equation}\label{c10}
\begin{split}
u_n=\frac{f(z_n)}{f(x_n)}&=-c_2c_3e_n^3+(20c_2^4+3c_2^2c_3-2c_3^2-2c_2c_4)e_n^4+(-238c_2^5+153c_2^3c_3+5c_2^2c_4\\
&-7c_3c_4+c_2(9c_3^2-3c_5))e_n^5+(1698c_2^6-2185c_2^4c_3+6c_3^3+231c_2^3c_4+26c_2c_3c_4\\
&-6c_4^2-10c_3c_5+7c_2^2(64c_3^2+c_5))e_n^6+(-9403c_2^7+17847c_2^5c_3-3197c_2^4c_4\\
&+1359c_2^2c_3c_4+c_4(23c_3^2-17c_5)+c_2^3(-7985c_3^2+308c_5)+2c_2(295c_3^3+9c_4^2\\
&+17c_3c_5))e_n^7+(44503c_2^8-111251c_2^6c_3+292c_3^4+25635c_2^5c_4-23315c_2^3c_3c_4\\
&+27c_3c_4^2+382c_2^4(203c_3^2-11c_5)+28c_3^2c_5-12c_5^2+2c_2c_4(1344c_3^2+23c_5)\\
&+c_2^2(-14492c_3^3+1031c_4^2+1816c_3c_5))e_n^8+O(e_n^9),
\end{split}
\end{equation}

According to the proof of Theorem \ref{theorem2}, we have
\begin{equation}\label{cc6}
\begin{split}
e_{n,w}=
w_n-x^*&=c_2c_3(12c_2^4+14c_2^2c_3-c_3^2-c_2c_4)e_n^8+2(120c_2^8+344c_2^6c_3-37c_2^4c_3^2+c_3^4\\
&-22c_2^5c_4-30c_2^3c_3c_4+5c_2c_3^2c_4+c_2^2(-40c_3^3+c_4^2+c_3c_5))e_n^9+(10296c_2^9+5059c_2^7c_3\\
&-1622c_2^6c_4+452c_2^4c_3c_4-19c_3^3c_4+c_2^2c_4(456c_3^2-7c_5)+c_2^3(236c_3^3+63c_4^2\\
&+91c_3c_5))e_n^{10}+O(e_n^{11}).
\end{split}
\end{equation}

For $f(w_n)$, we also obtain
\begin{equation}\label{ccc7}
f(w_{n})=f^{'}(x^{*})\left(e_{n,w} +
c_{2}e_{n,w}^{2}+c_{3}e_{n,w}^{3}+\ldots+c_{16}e_{n,w}^{16}\right)+O(e_{n,w}^{17}).
\end{equation}

By (\ref{c3}) and (\ref{ccc7}), we have
\begin{equation}\label{c11}
\begin{split}
p_n=\frac{f(w_n)}{f(x_n)}&=c_2c_3(12c_2^4+14c_2^2c_3-c_3^2-c_2c_4)e_n^7+(-240c_2^8-700c_2^6c_3+60c_2^4c_3^2-2c_3^4\\
&+44c_2^5c_4+61c_2^3c_3c_4-10c_2c_3^2c_4+c_2^2(81c_3^3-2c_4^2-2c_3c_5))e_n^8+(10536c_2^9\\
&+5759c_2^7c_3-1666c_2^6c_4+391c_2^4c_3c_4-19c_3^3c_4+c_2^2c_4(467c_3^2-7c_5)+c_2^5(-8121c_3^2\\
&+76c_5)+c_2c_3(151c_3^3-26c_4^2-17c_3c_5)+c_2^3(141c_3^3+65c_4^2+93c_3c_5))e_n^9+O(e_n^{10}).
\end{split}
\end{equation}

By (\ref{c7}) and (\ref{ccc7}), we get
\begin{equation}\label{c12}
\begin{split}
q_n=\frac{f(w_n)}{f(y_n)}&=-c_3(-12c_2^4-14c_2^2c_3+c_3^2+c_2c_4)e_n^6-2(120c_2^7+332c_2^5c_3-39c_2^3c_3^2-22c_2^4c_4\\
&-29c_2^2c_3c_4+4c_3^2c_4+c_2(-25c_3^3+c_4^2+c_3c_5))e_n^7+(9816c_2^8+4151c_2^6c_3-1534c_2^5c_4\\
&+449c_2^3c_3c_4+c_2c_4(275c_3^2-7c_5)+c_2^4(-6551c_3^2+76c_5)+c_3(41c_3^3-19c_4^2-13c_3c_5)\\
&+c_2^2(288c_3^3+59c_4^2+87c_3c_5))e_n^8+O(e_n^9).
\end{split}
\end{equation}

By (\ref{cc7}) and (\ref{ccc7}), we obtain
\begin{equation}\label{c13}
\begin{split}
r_n=\frac{f(w_n)}{f(z_n)}&=(-12c_2^4-14c_2^2c_3+c_3^2+c_2c_4)e_n^4+2(384c_2^5-58c_2^3c_3c_3-30c_2^2c_4+6c_3c_4\\
&+2c_2(-25c_3^2+c_5))e_n^5+(-4271c_2^6+3691c_2^4c_3-42c_3^3-78c_2^3c_4-177c_2c_3c_4+7c_4^2\\
&+10c_3c_5-c_2^2(148c_3^2+45c_5))e_n^6+2(15680c_2^7-23481c_2^5c_3+2760c_2^4c_4-237c_2^2c_3c_4\\
&+c_2^3(7182c_3^2-51c_5)+c_4(-102c_3^2+11c_5)-c_2(126c_3^3+75c_4^2+125c_3c_5))e_n^7\\
&+(-182548c_2^8+384369c_2^6c_3-191c_3^4-68503c_2^5c_4+43033c_2^3c_3c_4-321c_3c_4^2\\
&-278c_3^2c_5+17c_5^2-c_2c_4(1316c_3^2+417c_5)+c_2^4(-213934c_3^2+7347c_5)\\
&+c_2^2(28040c_3^3-398c_4^2-673c_3c_5)e_n^8+O(e_n^9).
\end{split}
\end{equation}
Expanding $I,J, K, L,M$ and $N$ at $0$ in $\mathbb{R}, \mathbb{R}^2, \mathbb{R}^3$ and $\mathbb{R}^4$, respectively,
yield

\begin{equation}\label{c14}
I(t_n)=I_{0}+t_n I_{1}+ \frac{t_n^2}{2}I_{2}+O(t_n^3),
\end{equation}

\begin{equation}\label{c15}
J(s_n)=J_{0}+s_n J_{1}+
\frac{s_n^2}{2}J_{2}+\frac{s_n^3}{6}J_{3}+O(s_n^4),
\end{equation}

\begin{equation}\label{c16}
K(u_n)=K_{0}+u_nK_{1}+\frac{u_n^2}{2}K_{2}+ O(u_n^3),
\end{equation}

\begin{equation}\label{c17}
\begin{split}
L(t_n,u_n)&=L_{0,0}+t_nL_{1,0}+u_nL_{0,1}+t_nu_nL_{1,1}+\frac{t_n^2}{2}L_{2,0}+\frac{u_n^2}{2}L_{0,2}+\frac{t_n
u_n^2}{2}L_{1,2}\\
&+\frac{t_n^2u_n}{2}L_{2,1}+\frac{t_n^2u_n^2}{4}L_{2,2}+\frac{t_n^3}{6}L_{3,0}+\frac{t_n^3u_n}{6}L_{3,1}+\frac{t_n^3u_n^2}{12}L_{3,2}+O(t_n^4,u_n^3),
\end{split}
\end{equation}

\begin{equation}\label{c18}
\begin{split}
M(p_n,q_n,r_n)&=M_{0,0,0}+p_nM_{1,0,0}+q_nM_{0,1,0}+r_nM_{0,0,1}+p_nq_nM_{1,1,0}+p_nr_nM_{1,0,1}+p_nq_nr_nM_{1,1,1}\\
&+q_nr_nM_{0,1,1}+\frac{p_n^2}{2}M_{2,0,0}+\frac{q_n^2}{2}M_{0,2,0}+\frac{r_n^2}{2}M_{0,0,2}+\frac{p_nr_n^2}{2}M_{1,0,2}+\frac{p_n^2r_n}{2}M_{2,0,1}\\
&+\frac{p_n^2q_n}{2}M_{2,1,0}+\frac{p_n
q_n^2}{2}M_{1,2,0}+\frac{p_n
r_n^2}{2}M_{1,0,2}+\frac{q_nr_n^2}{2}M_{0,1,2}+\frac{q_n^2r_n}{2}M_{0,2,1}\\
&+\frac{p_nq_nr_n^2}{2}M_{1,1,2}+\frac{p_nq_n^2r_n}{2}M_{1,2,1}+\frac{p_n^2q_nr_n}{2}M_{2,1,1}+\frac{q_n^2r_n^2}{4}M_{0,2,2}+\frac{p_nq_n^2r_n^2}{4}M_{1,2,2}\\
&+\frac{p_n^2r_n^2}{4}M_{2,0,2}+\frac{p_n^2q_nr_n^2}{4}M_{2,1,2}+\frac{p_n^2q_n^2}{4}M_{2,2,0}+\frac{p_n^2q_n^2r_n}{4}M_{2,2,1}+\frac{p_n^2q_n^2r_n^2}{8}M_{2,2,2}\\
&+O(p_n^3,q_n^3,r_n^3),\\
\end{split}
\end{equation}
and,
\begin{equation}\label{c19}
\begin{split}
N(t_n,s_n,u_n,r_n)&=N_{0,0,0,0}+t_nN_{1,0,0,0}+s_nN_{0,1,0,0}+u_nN_{0,0,1,0}+r_nN_{0,0,0,1}+t_ns_nN_{1,1,0,0}\\
&+t_nu_nN_{1,0,1,0}+t_nr_nN_{1,0,0,1}+u_nr_nN_{0,0,1,1}+s_nu_nN_{0,1,1,0}+s_nr_nN_{0,1,0,1}\\
&+t_ns_nu_nN_{1,1,1,0}+t_ns_nr_nN_{1,1,0,1}+s_nu_nr_nN_{0,1,1,1}+t_nu_nr_nN_{1,0,1,1}\\
&+t_ns_nu_nr_nN_{1,1,1,1}+\frac{t_n^2}{2}N_{2,0,0,0}+\frac{s_n^2}{2}N_{0,2,0,0}+\frac{s_n^2r_n}{2}N_{0,2,0,1}+\frac{s_n^2u_n}{2}N_{0,2,1,0}\\
&+\frac{s_n^2u_nr_n}{2}N_{0,2,1,1}+\frac{t_ns_n^2}{2}N_{1,2,0,0}+\frac{t_ns_n^2r_n}{2}N_{1,2,0,1}+\frac{t_ns_n^2u_n}{2}N_{1,2,1,0}\\
&+\frac{t_n^2r_n}{2}N_{2,0,0,1}+\frac{t_n^2u_n}{2}N_{2,0,1,0}+\frac{t_n^2s_n}{2}N_{2,1,0,0}+\frac{t_n^2u_nr_n}{2}N_{2,0,1,1}\\
&+\frac{t_n^2s_n}{2}N_{2,1,0,0}+\frac{t_n^2s_nr_n}{2}N_{2,1,0,1}+\frac{t_n^2s_nu_n}{2}N_{2,1,1,0}+\frac{t_n^2s_nu_nr_n}{2}N_{2,1,1,1}\\
&+\frac{t_n^2s_n^2}{4}N_{2,2,0,0}+\frac{t_n^2s_n^2r_n}{4}N_{2,2,0,1}+\frac{t_n^2s_n^2u_n}{4}N_{2,2,1,0}+\frac{t_n^2s_n^2u_nr_n}{4}N_{2,2,1,1}\\
&+\frac{t_n^3}{6}N_{3,0,0,0}+\frac{t_n^3r_n}{6}N_{3,0,0,1}+\frac{t_n^3u_n}{6}N_{3,0,1,0}+\frac{t_n^3u_nr_n}{6}N_{3,0,1,1}\\
&+\frac{t_n^3s_n}{6}N_{3,1,0,0}+\frac{t_n^3s_nr_n}{6}N_{3,1,0,1}+\frac{t_n^3s_nu_n}{6}N_{3,1,1,0}+\frac{t_n^3s_nu_nr_n}{6}N_{3,1,1,1}\\
&+\frac{t_n^3s_n^2}{12}N_{3,2,0,0}+\frac{t_n^3s_n^2r_n}{12}N_{3,2,0,1}+\frac{t_n^3s_n^2u_n}{12}N_{3,2,1,0}+\frac{t_n^3s_n^2u_nr_n}{12}N_{3,2,1,1}\\
&+\frac{t_n^4}{24}N_{4,0,0,0}+\frac{t_n^4r_n}{24}N_{4,0,0,1}+\frac{t_n^4u_n}{24}N_{4,0,1,0}+\frac{t_n^4s_nr_n}{24}N_{4,1,0,1}+\frac{t_n^4s_n}{24}N_{4,1,0,0}\\
&+\frac{t_n^4s_nr_n}{24}N_{4,1,0,1}+\frac{t_n^4s_nu_n}{24}N_{4,1,1,0}+\frac{t_n^4s_nu_nr_n}{24}N_{4,1,1,1}+\frac{t_n^4s_n^2}{48}N_{4,2,0,0}\\
&+\frac{t_n^4s_n^2r_n}{48}N_{4,2,0,1}+\frac{t_n^4s_n^2u_n}{48}N_{4,2,1,0}+\frac{t_n^4s_n^2u_nr_n}{48}N_{4,2,1,1}+O(t_n^5,s_n^3,u_n^2,r_n^2).\\
\end{split}
\end{equation}
By substituting (\ref{c3})-(\ref{c19})
 into (\ref{c2}), we obtain
\begin{equation*}
\begin{split}
e_{n+1}=x_{n+1}-x^*&=R_8e_n^8+R_9e_n^9+R_{10}e_n^{10}+R_{11}e_n^{11}+R_{12}e_n^{12}\\
&+R_{13}e_n^{13}+R_{14}e_n^{14}+R_{15}e_n^{15}+R_{16}e_n^{16}+O(e_n^{17}),
\end{split}
\end{equation*}
where
\begin{equation*}
\begin{split}
R_8&=-c_2c_3\left(12c_2^4+14c_2^2c_3-c_3^2-c_2c_4\right)\left(-1+I_{0}+J_{0}+K_{0}+L_{0,0}+M_{0,0,0}+N_{0,0,0,0}\right)\\
\\
R_9&=c_2^2c_3\left(-12c_2^4-14c_2^2c_3+c_3^2+c_2c_4\right)\left(-2+I_{1}+L_{1,0}+N_{1,0,0,0}\right),\\
\\
R_{10}&=\frac{-1}{2}c_2c_3\left(12c_2^4+14c_2^2c_3-c_3^2-c_2c_4\right)\\
&\big(-2c_3\left(-1+J_{1}+N_{0,1,0,0}\right)+c_2^2(-12+I_{2}+L_{2,0}+N_{2,0,0,0})\big),\\
\end{split}
\end{equation*}

\begin{equation*}
\begin{split}
R_{11}&=\frac{-1}{6}c_2^2c_3\left(12c_2^4+14c_2^2c_3-c_3^2-c_2c_4\right)\big(-6c_3\left(-4+K_1+L_{0,1}+N_{0,0,1,0}+N_{1,1,0,0}\right)\\
&+c_2^2\left(L_{3,0}+N_{3,0,0,0}\right)\big),\\
\\
R_{12}&=\frac{1}{24}c_2c_3\left(12c_2^4+14c_2^2c_3-c_3^2-c_2c_4\right)\\
&(-24c_2c_4(-1+M_{0,0,1}+N_{0,0,0,1})-12c_3^2(J_2+2(-1+M_{0,0,1}+N_{0,0,0,1})+N_{0,2,0,0})\\
&+12c_2^2c_3(2(-15+L_{1,1}+14M_{0,0,1}+14N_{0,0,0,1}+N_{1,0,1,0})+N_{2,1,0,0})\\
&+c_2^4(288(-1+M_{0,0,1}+N_{0,0,0,1})-N_{4,0,0,0})),\\
\\
R_{13}&=\frac{1}{6}c_2^2c_3(12c_2^4+14c_2^2c_3-c_3^2-c_2c_4)\\
&(72c_2^4(-2+N_{1,0,0,1})-6c_2c_4(-2+N_{1,0,0,1})+3c_3^2(2H_{0,1,1}-2(-4+N_{0,1,1,0}+N_{1,0,0,1})-N_{1,2,0,0})\\
&+c_2^2c_3(3(-68+L_{2,1}+28N_{1,0,0,1}+N_{2,0,1,0})+N_{3,1,0,0})),\\
\end{split}
\end{equation*}

\begin{equation*}
\begin{split}
R_{14}&=\frac{1}{24}c_2c_3(12c_2^4+14c_2^2c_3-c_3^2-c_2c_4)\\
&(24c_2c_3c_4(-2+M_{0,1,0}+N_{0,1,0,1})+4c_3^3(J_3+6(-1+M_{0,1,0}+N_{0,1,0,1}))\\
&+144c_2^6(-12+N_{2,0,0,1})-12c_2^3c_4(-12+N_{2,0,0,1})\\
&-6c_2^2c_3^2(2(-60+K_2+L_{0,2}-2H_{1,1,1}+28M_{0,1,0}+28N_{0,1,0,1}+2N_{1,1,1,0}+N_{2,0,0,1})+N_{2,2,0,0})\\
&+c_2^4c_3(4(-372+L_{3,1}-72M_{0,1,0}-72N_{0,1,0,1}+42N_{2,0,0,1}+N_{3,0,1,0})+N_{4,1,0,0})),\\
\end{split}
\end{equation*}

\begin{equation*}
\begin{split}
R_{15}&=\frac{1}{24}c_2^2c_3(12c_2^4+14c_2^2c_3-c_3^2-c_2c_4)\\
&(24c_2c_3c_4(-8+M_{1,0,0}+N_{0,0,1,1}+N_{1,1,0,1})+12c_3^3(-8+2M_{1,0,0}+2N_{0,0,1,1}+N_{0,2,1,0}+2N_{1,1,0,1})\\
&+48c_2^6N_{3,0,0,1}-4c_2^3c_4N_{3,0,0,1}-2c_2^2c_3^2(6(L_{1,2}+4(-51+7M_{1,0,0}+7N_{0,0,1,1}+7N_{1,1,0,1})+N_{2,1,1,0}\\
&+2N_{3,0,0,1}+N_{3,2,0,0})+c_2^4c_3(-288(-6+M_{1,0,0}+N_{0,0,1,1}+N_{1,1,0,1})+56N_{3,0,0,1}+N_{4,0,1,0})),\\
\end{split}
\end{equation*}

\begin{equation*}
\begin{split}
R_{16}&=\frac{-1}{48}c_2c_3(12c_2^4+14c_2^2c_3-c_3^2-c_2c_4)\\
&(24c_3^4(-2+M_{0,0,2}+N_{0,2,0,1})+24c_2c_3^2c_4(-6+2M_{0,0,2}+N_{0,2,0,1})\\
&-24c_2^3c_3c_4(-88+28M_{0,0,2}+2N_{1,0,1,1}+N_{2,1,0,1})-24c_2^2(2c_3c_5-c_4^2(-2+M_{0,0,2})\\
&+c_3^3(-86+28M_{0,0,2}+14N_{0,2,0,1}+2N_{1,0,1,1}+N_{1,2,1,0}+6N_{1,2,1,0}+N_{2,1,0,1}))\\
&+4c_2^6c_3(36(-145+56M_{0,0,2}+4N_{1,0,1,1}+2N_{2,1,0,1})-7N_{4,0,0,1})\\
&+24c_2^8(144(-2+M_{0,0,2})-N_{4,0,0,1})+2c_2^5c_4(576-288M_{0,0,2}+N_{4,0,0,1})+c_2^4c_3^2\\
&(2(6L_{2,2}+4(6(86M_{0,0,2}-6N_{0,2,0,1}+7(-45+2N_{1,0,1,1}+N_{2,1,0,1}))+N_{3,1,1,0})+N_{4,0,0,1})+N_{4,2,0,0})).
\end{split}
\end{equation*}

In general $R_{16}\neq 0$, however, by setting $R_8 = R_9 =R_{10}
=R_{11}=R_{12}=R_{13}=R_{15}=R_{15} = 0$, the convergence order
becomes $16$. Sufficient conditions are given by the following
set of equations
\begin{equation*}
  \begin{array}{lrl}
    K_0=1, \quad I_0=J_0=L_{0,0}=M_{0,0,0}=N_{0,0,0,0}=0, & \Rightarrow & R_8 = 0,
  \\[4ex]
    I_1=2, \quad L_{1,0}=N_{1,0,0,0}=0, & \Rightarrow & R_9=0,
  \\[4ex]
    I_2=12, \quad J_1=1, \quad L_{2,0}=N_{0,1,0,0}=N_{2,0,0,0}=0, & \quad \Rightarrow & R_{10} = 0,
  \\[4ex]
   K_1=4, \quad L_{0,1}=L_{3,0}=N_{0,0,1,0}=N_{1,1,0,0}=N_{3,0,0,0}=0, & \Rightarrow & R_{11}=0,
    \\[4ex]
   L_{1,1}=1, \quad M_{0,0,1}=1, \quad J_2=N_{0,0,0,1}=N_{0,2,0,0}=N_{4,0,0,0}=N_{1,0,1,0}=N_{2,1,0,0}=0, & \Rightarrow & R_{12}=0 ,
    \\[4ex]
   L_{2,1}=12, \quad N_{1,0,0,1}=N_{0,1,1,0}=2, \quad H_{0,1,1}=N_{1,2,0,0}=N_{2,0,1,0}=N_{3,1,0,0}=0, & \Rightarrow & R_{13}=0 ,
    \\[4ex]
    J_3=-6, \quad K_2=-8,\quad L_{3,1}=12, \quad M_{0,1,0}=2, \quad N_{2,0,0,1}=12,
    \\[1ex]
    L_{0,2}=H_{1,1,1}=N_{0,1,0,1}=N_{1,1,1,0}=N_{2,2,0,0}=N_{3,0,1,0}=N_{4,1,0,0}=0 & \Rightarrow & R_{14}=0 ,
    \\[4ex]
    L_{1,2}=-20, \quad M_{1,0,0}=8,\quad N_{0,2,1,0}=-8 \quad N_{4,0,1,0}=576,
    \\[1ex]
   N_{0,0,1,1}=N_{1,1,0,1}=N_{3,0,0,1}=N_{2,1,1,0}=N_{3,2,0,0}=0 & \Rightarrow & R_{15}=0 ,
   \\[4ex]
    L_{2,2}=12, \quad M_{0,0,2}=2,\quad N_{1,0,1,1}=30 \quad N_{1,2,1,0}=-18,
    \\[1ex]
   N_{0,2,0,1}=N_{2,1,0,1}=N_{4,0,0,1}=N_{3,1,1,0}=N_{4,2,0,0}=0 & \Rightarrow & R_{16}\neq0 ,
  \end{array}
\end{equation*}
and the error equation becomes
\begin{equation*}
  e_{n+1}=\left(-c_2^2c_3^2(12c_2^4-c_3^2-c_2c_4)(93c_2^5-c_3c_4-c_2c_5)\right)e_n^{16}+O(e_n^{17}),
\end{equation*}
which finishes the proof.
\end{proof}
\section{Numerical results}
\subsection{Numerical implementation and comparison}

In this section, three concrete methods of each of the families
(\ref{b2}) and(\ref{c2}) are tested on a number of nonlinear
equations. To obtain a high accuracy and avoid the loss of
significant digits, we employed multi-precision arithmetic with
7000 significant
decimal digits in the programming package of Mathematica 8.\\

\begin{method}\label{mm1}
Weight  functions $G$ and $H$ in (\ref{b2}) are given  by
\begin{equation}\label{dd1}
\begin{split}
&G(t_n)=-6t_n^3+5t_n^2+2t_n+1,\\
&H(t_n,s_n,u_n)=1+2t_n+4u_n+6t_n^2+s_n,
\end{split}
\end{equation}

where $t_n=\frac{f(y_n)}{f(x_n)}$, $s_n=\frac{f(z_n)}{f(y_n)}$ and
$u_n=\frac{f(z_n)}{f(x_n)}$.

These functions  satisfy the given conditions in Theorems
\ref{theorem1} and \ref{theorem2}, so
\begin{equation}\label{dd2}
\begin{cases}
y_{n}=x_n-\dfrac{f(x_n)}{f'(x_n)},\\
z_{n}=y_n- (-6t_n^3+5t_n^2+2t_n+1) \cdot\dfrac{f(y_n)}{f'(x_n)},\\
x_{n+1}=z_n-(1+2t_n+4u_n+6t_n^2+s_n)\cdot\dfrac{f(z_n)}{f'(x_n)}.
\end{cases}
\end{equation}
\end{method}

\begin{method}\label{mmmm0}
The method by H.T. Kung and J.F. Traub \cite{Kung} is given by
\begin{equation}\label{KT0}
\begin{cases}
y_n=x_n-\dfrac{f(x_n)}{f'(x_n)},\\
z_n=y_n-G_f(x_n),\\
x_{n+1}=z_n-f^2(x_n)f(y_n)H_f(x_n,y_n,z_n),\\
\end{cases}
\end{equation}
where\\
\\
\begin{equation*}
\begin{split}
G_f(x_n)&=\frac{f^2(x_n)f(y_n)}{f'(x_n)(f(x_n)-f(y_n))^2},\\
H_f(x_n,y_n,z_n)&=G_f(x_n)\\
&\big(\frac{-1}{f^2(x_n)(f(x_n)-f(z_n))}+\frac{f(y_n)-f(x_n)}{f(x_n)f(y_n)(f(x_n)-f(z_n))^2}\\
&+\frac{1}{(f(y_n)-f(z_n))(f(x_n)-f(z_n))^2}\big).
\end{split}
\end{equation*}
\end{method}

\begin{method}\label{mmmm11}
The method by B. Neta \cite{Neta0} is given by
\begin{equation}\label{NNN}
\begin{cases}
y_n=x_n-\dfrac{f(x_n)}{f'(x_n)},\\
z_n=y_n-\dfrac{f(x_n)+Af(y_n)}{f(x_n)+(A-2)f(y_n)}\dfrac{f(y_n)}{f'(x_n)},
\quad A\in \mathbb{R},\\
x_{n+1}=y_n+\delta_1f^2(x_n)+\delta_2f^3(x_n),
\end{cases}
\end{equation}
where\\
\\
\begin{equation*}
F_y=f(y_n)-f(x_n), \quad F_z=f(z_n)-f(x_n),
\end{equation*}
\begin{equation*}
 \zeta_y=\dfrac{1}{F_y}
\left(\dfrac{y_n-x_n}{F_y}-\dfrac{1}{f'(x_n)}\right),
\zeta_z=\dfrac{1}{F_z}
\left(\dfrac{z_n-x_n}{F_z}-\dfrac{1}{f'(x_n)}\right),
\end{equation*}
\begin{equation*}
\delta_1=\zeta_y+\delta_2F_y, \quad \quad
\delta_2=-\dfrac{\zeta_y-\zeta_z}{F_y-F_z},
\end{equation*}
\end{method}

\begin{method}
The method by Khattri and Steihaug \cite{Khattri} is given by
\begin{equation}\label{kh1}
\begin{cases}
y_n=x_n-\frac{f(x_n)}{f'(x_n)},\\
z_n=y_n-\frac{f(y_n)}{\frac{x_n-y_n+\alpha f(x_n)}{(x_n-y_n)\alpha}-\frac{(x_n-y_n)f(x_n+\alpha f(x_n))}{(x_n-y_n+\alpha f(x_n))\alpha f(x_n)}-\frac{(2x_n-2y_n+\alpha f(x_n))f(y_n)}{(x_n-y_n)(x_n-y_n+\alpha f(x_n))}},\quad \alpha\in \mathbb{R},\\
x_{n+1}=z_n-\frac{f(z_n)}{H_1f(x_n)+H_2f(x_n+\alpha
f(x_n))+H_3f(y_n)+H_4f(z_n)},
\end{cases}
\end{equation}
where
\begin{equation*}
\begin{split}
H_1&=\frac{(y_n-z_n)(z_n-x_n-\alpha f(x_n))}{\alpha
f(x_n)+(y_n-x_n)(z_n-x_n)},\\
H_2&=\frac{-x_ny_n+x_nz_n+y_nz_n-z_n^2}{\alpha f(x_n)(-\alpha
f(x_n)+y_n-x_n)(\alpha f(x_n)+x_n-z_n)},\\
H_3&=-\frac{(x_n+\alpha f(x_n))x_n-(x_n+\alpha f(x_n))z_n-x_n
z_n+z_n^2}{(\alpha f(x_n)+x_n-y_n)(y_n-x_n)(y_n-z_n)},\\
H_4&=-\frac{(x_n+\alpha f(x_n))x_n+(x_n+\alpha f(x_n))y_n+x_n
y_n-2(x_n+\alpha f(x_n))z_n-2x_nz_n-2y_nz_n+3z_n^2}{(\alpha
f(x_n)+x_n-z_n)(z_n-x_n)(z_n-y_n)}.
\end{split}
\end{equation*}
\end{method}

\begin{method}\label{m1}
Weight  functions $G$, $H$, $I$, $J$, $K$, $L$, $M$ and $N$ in
(\ref{c2}) are given  by
\begin{equation}\label{d1}
\begin{split}
&G(t_n)=-6t_n^3+5t_n^2+2t_n+1,\\
&H(t_n,s_n,u_n)=1+2t_n+4u_n+6t_n^2+s_n,\\
&I(t_n)=6t_n^2+2t_n, \quad J(s_n)=-s_n^3+s_n+1, \quad K(u_n)=4u_n-4u_n^2,\\
&L(t_n, u_n)=t_nu_n+6t_n^2u_n+2t_n^3u_n-10t_nu_n^2, \quad M(p_n,q_n,r_n)=r_n+2q_n+8p_n,\\
&N(t_n,s_n,u_n,r_n)=2t_nr_n+2s_nu_n+6t_n^2r_n-4s_n^2u_n+24t_n^4u_n,\\
\end{split}
\end{equation}

where $t_n=\frac{f(y_n)}{f(x_n)}$, $s_n=\frac{f(z_n)}{f(y_n)}$,
$u_n=\frac{f(z_n)}{f(x_n)}$, $p_n=\frac{f(w_n)}{f(x_n)}$,
$q_n=\frac{f(w_n)}{f(y_n)}$ and $r_n=\frac{f(w_n)}{f(z_n)}$.

These functions  satisfy the given conditions in Theorems
\ref{theorem1}, \ref{theorem2}, and \ref{theorem3}, so
\begin{equation}\label{d2}
\begin{cases}
y_{n}=x_n-\dfrac{f(x_n)}{f'(x_n)},\\
z_{n}=y_n- (-6t_n^3+5t_n^2+2t_n+1) \cdot\dfrac{f(y_n)}{f'(x_n)},\\
w_{n}=z_n-(1+2t_n+4u_n+6t_n^2+s_n)\cdot\dfrac{f(z_n)}{f'(x_n)},\\
x_{n+1}=w_n-\big[1+6t_n^2+2t_n-s_n^3+s_n+4u_n-4u_n^2\\
\quad \quad +t_nu_n+6t_n^2u_n+2t_n^3u_n-10t_nu_n^2+r_n+2q_n+8p_n\\
\quad \quad +2t_nr_n+2s_nu_n+6t_n^2r_n-4s_n^2u_n+24t_n^4u_n
\big]\cdot\dfrac{f(w_n)}{f'(x_n)}.
\end{cases}
\end{equation}
\end{method}
\begin{method}\label{m2} Weight  functions $G$, $H$, $I$, $J$, $K$, $L$, $M$, and $N$ in (\ref{c2}) are given  by
\begin{equation}\label{d3}
\begin{split}
&G(t_n)=t_n^2(5-7t_n)+(2t_n+1)(t_n^3+1)-2t_n^4,\\
&H(t_n,s_n,u_n)=(1+s_n)+(6+u_n^2)(u_n+t_n^2)+2(t_n-u_n),\\
&I(t_n)=(1+t_n)(2t_n+t_n^2)+t_n^2(3-t_n), \quad J(s_n)=\frac{s_n+s_n^2-s_n^3}{1+s_n}, \quad K(u_n)=\frac{1+5u_n}{1+u_n},\\
&L(t_n, u_n)=t_nu_n+6t_n^2u_n+\frac{2t_n^3u_n-10t_nu_n^2}{1+t_nu_n}, \quad M(p_n,q_n,r_n)=2(p_n+q_n)+\frac{6p_n+r_n}{1+p_n},\\
&N(t_n,s_n,u_n,r_n)=8t_n^2r_n-4s_n^2u_n-2t_n^3r_n+\frac{2s_nu_n+2t_nr_n+24t_n^4u_n+2t_ns_nu_n}{1+t_n},\\
\end{split}
\end{equation}
where $t_n=\frac{f(y_n)}{f(x_n)}$, $s_n=\frac{f(z_n)}{f(y_n)}$,
$u_n=\frac{f(z_n)}{f(x_n)}$, $p_n=\frac{f(w_n)}{f(x_n)}$,
$q_n=\frac{f(w_n)}{f(y_n)}$ and $r_n=\frac{f(w_n)}{f(z_n)}$.

These functions  satisfy the given conditions in Theorems
\ref{theorem1}, \ref{theorem2}, and \ref{theorem3}, so
\begin{equation}\label{d4}
\begin{cases}
y_n=x_n-\dfrac{f(x_n)}{f'(x_n)},\\
z_{n}=y_n- (t_n^2(5-7t_n)+(2t_n+1)(t_n^3+1)-2t_n^4) \cdot\dfrac{f(y_n)}{f'(x_n)},\\
w_{n}=z_n-\left((1+s_n)+(6+u_n^2)(u_n+t_n^2)+2(t_n-u_n)\right)\cdot\dfrac{f(z_n)}{f'(x_n)},\\
x_{n+1}=w_n-\big[(1+t_n)(2t_n+t_n^2)+3t_n^2-t_n^3+8t_n^2r_n-4s_n^2u_n-2t_n^3r_n+t_nu_n+6t_n^2u_n+2(p_n+q_n)\\
\quad
\quad+\frac{1+5u_n}{1+u_n}+\frac{2t_n^3u_n-10t_nu_n^2}{1+t_nu_n}+\frac{6p_n+r_n}{1+p_n}+\frac{s_n+s_n^2-s_n^3}{1+s_n}+\frac{2s_nu_n+2t_nr_n+24t_n^4u_n+2t_ns_nu_n}{1+t_n}\big]\cdot\dfrac{f(w_n)}{f'(x_n)}.
\end{cases}
\end{equation}
\end{method}
\begin{method}\label{m3}
Weight  functions $G$, $H$, $I$, $J$, $K$, $L$, $M$, and $N$ in
(\ref{c2}) are given  by
\begin{equation}\label{d5}
\begin{split}
&G(t_n)=(1+t_n^2)(1+2t_n+2t_n^2)+t^2(2-8t_n-2t_n^2),\\
&H(t_n,s_n,u_n)=4u_n-5s_n+(6+s_n^3)(t_n^2+s_n)+(1+u_n^3)(1+2t_n),\\
&I(t_n)=(1+t_n)(2t_n+t_n^3)+t_n^2(4-t_n-t_n^2), \quad J(s_n)=-2s_n^2+\frac{s_n+2s_n^2}{1+s_n^2},\\
&K(u_n)=1+6u_n-\frac{2u_n+6u_n^2}{1+u_n}, \quad L(t_n, u_n)=t_nu_n+\frac{2t_n^3u_n-10t_nu_n^2+6t_n^2u_n}{1+2t_nu_n},\\
&M(p_n,q_n,r_n)=\frac{1+2p_n+2q_n}{1-r_n}+\frac{6p_n}{1+q_n}-1,\\
&N(t_n,s_n,u_n,r_n)=2t_nr_n+2s_nu_n+24t_n^4u_n+\frac{6t_n^2r_n+6t_n^3r_n-4s_n^2u_n}{1+t_n},\\
\end{split}
\end{equation}
where $t_n=\frac{f(y_n)}{f(x_n)}$, $s_n=\frac{f(z_n)}{f(y_n)}$,
$u_n=\frac{f(z_n)}{f(x_n)}$, $p_n=\frac{f(w_n)}{f(x_n)}$,
$q_n=\frac{f(w_n)}{f(y_n)}$ and $r_n=\frac{f(w_n)}{f(z_n)}$.

These functions  satisfy the given conditions in Theorems
\ref{theorem1}, \ref{theorem2}, and \ref{theorem3}, so
\begin{equation}\label{d6}
\begin{cases}
y_n=x_n-\dfrac{f(x_n)}{f'(x_n)},\\
z_{n}=y_n- ((1+t_n^2)(1+2t_n+2t_n^2)+t^2(2-8t_n-2t_n^2)) \cdot\dfrac{f(y_n)}{f'(x_n)},\\
w_{n}=z_n-\left(4u_n-5s_n+(6+s_n^3)(t_n^2+s_n)+(1+u_n^3)(1+2t_n)\right)\cdot\dfrac{f(z_n)}{f'(x_n)},\\
x_{n+1}=w_n-\big[(1+t_n)(2t_n+t_n^3)+4t_n^2-t_n^3-t_n^4-2s_n^2+6u_n+2t_nr_n+2s_nu_n+24t_n^4u_n+t_nu_n\\
\quad \quad
+\frac{2t_n^3u_n-10t_nu_n^2+6t_n^2u_n}{1+2t_nu_n}+\frac{1+2p_n+2q_n}{1-r_n}+\frac{6p_n}{1+q_n}-\frac{2u_n+6u_n^2}{1+u_n}+\frac{s_n+2s_n^2}{1+s_n^2}+\frac{6t_n^2r_n+6t_n^3r_n-4s_n^2u_n}{1+t_n}\big]\cdot\dfrac{f(w_n)}{f'(x_n)}.
\end{cases}
\end{equation}
\end{method}

\begin{method}\label{mmm0}
The method by H.T. Kung and J.F. Traub \cite{Kung} is given by
\begin{equation}\label{KT}
\begin{cases}
y_n=x_n-\dfrac{f(x_n)}{f'(x_n)},\\
z_n=y_n-G_f(x_n),\\
s_n=z_n-f^2(x_n)f(y_n)H_f(x_n,y_n,z_n),\\
x_{n+1}=s_n+\frac{f^2(x_n)f(y_n)f(z_n)}{f(x_n)-f(s_n)}\left(H_f(x_n,y_n,z_n)-\frac{K_f(x_n,y_n,z_n)-L_f(x_n,y_n,z_n,s_n)}{f(x_n)-f(s_n)}\right),
\end{cases}
\end{equation}
where\\
\\
\begin{equation*}
\begin{split}
G_f(x_n)&=\frac{f^2(x_n)f(y_n)}{f'(x_n)(f(x_n)-f(y_n))^2},\\
H_f(x_n,y_n,z_n)&=G_f(x_n)\\
&\big(\frac{-1}{f^2(x_n)(f(x_n)-f(z_n))}+\frac{f(y_n)-f(x_n)}{f(x_n)f(y_n)(f(x_n)-f(z_n))^2}\\
&+\frac{1}{(f(y_n)-f(z_n))(f(x_n)-f(z_n))^2}\big),\\
K_f(x_n,y_n,z_n)&=\frac{f(x_n)(f(y_n)-f(z_n))(f(x_n)-f(y_n))-f^2(x_n)f(y_n)}{f'(x_n)(f(x_n)-f(z_n))(f(x_n)-f(y_n))^2(f(y_n)-f(z_n))},\\
L_f(x_n,y_n,z_n,s_n)&=\frac{G_f(x_n)(f(z_n)-f(s_n))-\left(f(y_n)f^2(x_n)H_f(x_n,y_n,z_n)\right)(f(y_n)-f(z_n))}{(f(y_n)-f(s_n))(f(y_n)-f(z_n))(f(z_n)-f(s_n))}.
\end{split}
\end{equation*}
\end{method}

\begin{method}\label{mmm11}
The method by B. Neta \cite{Neta0} is given by
\begin{equation}\label{NNNN}
\begin{cases}
y_n=x_n-\dfrac{f(x_n)}{f'(x_n)},\\
z_n=y_n-\dfrac{f(x_n)+Af(y_n)}{f(x_n)+(A-2)f(y_n)}\dfrac{f(y_n)}{f'(x_n)},
\quad A\in \mathbb{R},\\
s_n=y_n+\delta_1f^2(x_n)+\delta_2f^3(x_n),\\
x_{n+1}=y_n+\theta_1f^2(x_n)+\theta_2f^3(x_n)+\theta_3f^4(x_n),
\end{cases}
\end{equation}
where\\
\\
\begin{equation*}
F_y=f(y_n)-f(x_n), \quad F_z=f(z_n)-f(x_n), \quad
F_s=f(s_n)-f(x_n),
\end{equation*}
\begin{equation*}
 \zeta_y=\dfrac{1}{F_y}
\left(\dfrac{y_n-x_n}{F_y}-\dfrac{1}{f'(x_n)}\right),
\zeta_z=\dfrac{1}{F_z}
\left(\dfrac{z_n-x_n}{F_z}-\dfrac{1}{f'(x_n)}\right),
\zeta_s=\dfrac{1}{F_s}
\left(\dfrac{s_n-x_n}{F_s}-\dfrac{1}{f'(x_n)}\right),
\end{equation*}
\begin{equation*}
\delta_1=\zeta_y+\delta_2F_y, \quad \quad
 \delta_2=-\dfrac{\zeta_y-\zeta_z}{F_y-F_z},\quad \quad \gamma_1=\dfrac{\zeta_s-\zeta_z}{F_s-F_z},\quad \quad
\gamma_2=\dfrac{\zeta_y-\zeta_z}{F_y-F_z},
 \end{equation*}
\begin{equation*}
\theta_1=\zeta_s+\theta_2F_s-\theta_3F_s^2, \quad
\theta_2=\gamma_1+\theta_3(F_s-+F_z), \quad
\theta_3=\dfrac{\gamma_1-\gamma_2}{F_s-F_y},
\end{equation*}
\end{method}

\begin{method}\label{m4}
The method by Y. H. Geum and Y. I. Kim \cite{Geum1} is given by
\begin{equation}\label{d7}
\begin{cases}
y_n=x_n-\dfrac{f(x_n)}{f'(x_n)},\\
z_{n}=y_n- K_f(u_n) \cdot\dfrac{f(y_n)}{f'(x_n)},\\
s_{n}=z_n-H_f(u_n, v_n, w_n)\cdot\dfrac{f(z_n)}{f'(x_n)},\\
x_{n+1}=s_n-W_f(u_n,v_n,w_n,t_n)\cdot\dfrac{f(s_n)}{f'(x_n)}.
\end{cases}
\end{equation}
where
\begin{equation*}
\begin{split}
&K_f(u_n)=\dfrac{1+\beta u_n+\lambda u_n^2}{1+(\beta-2)u_n+\mu u_n^2},\\
&H_f(u_n,v_n,w_n)=\dfrac{1+a u_n +b v_n+\gamma w_n}{1+c u_n+d v_n+\sigma w_n},\\
&W_f(u_n,v_n,w_n,t_n)=\dfrac{1+B_1u_n+B_2v_nw_n}{1+B_3v_n+B_4w_n+B_5t_n+B_6v_nw_n}+G(u_n,w_n),
\end{split}
\end{equation*}
and $u_n=\frac{f(y_n)}{f(x_n)}$, $v_n=\frac{f(z_n)}{f(y_n)}$,
$w_n=\frac{f(z_n)}{f(x_n)}$ and $t_n=\frac{f(s_n)}{f(z_n)}$.\\
\begin{equation*}\label{d8}
\begin{split}
&a=2, \quad b=0, \quad  c=0, \quad  d=-1, \quad  \gamma=2+\sigma, \quad \lambda=-9+\frac{5\beta}{2},\quad \mu=-4+\frac{\beta}{2}, \\
&B_1=2, \quad B_2=2+\sigma, \quad B_3=-1, \quad B_4=-2, \quad
B_5=-1, \quad B_6=2(1+\sigma),
\end{split}
\end{equation*}
With weight function
\begin{equation}\label{d8}
\begin{split}
G(u_n,w_n)&=\frac{-1}{2}\left(u_nw_n\left(6+12u_n+u_n^2(24-11\beta)+u_n^3\phi_1+4\sigma\right)\right)+\phi_2w_n^2,\\
&\beta=2, \quad \sigma=-2, \quad \phi_1=11\beta^2-66\beta+136,
\quad \phi_2=2u_n(\sigma^2-2\sigma-9)-4\sigma-6.\\
\end{split}
\end{equation}

\end{method}
\begin{method}\label{m5}
The method by Y. H. Geum and Y. I. Kim \cite{Geum2} is given by
\begin{equation}\label{d9}
\begin{cases}
y_n=x_n-\dfrac{f(x_n)}{f'(x_n)},\\
z_{n}=y_n- K_f(u_n) \cdot\dfrac{f(y_n)}{f'(x_n)},\\
s_{n}=z_n-H_f(u_n, v_n, w_n)\cdot\dfrac{f(z_n)}{f'(x_n)},\\
x_{n+1}=s_n-W_f(u_n,v_n,w_n,t_n)\cdot\dfrac{f(s_n)}{f'(x_n)}.
\end{cases}
\end{equation}
where
\begin{equation*}
\begin{split}
&K_f(u_n)=\dfrac{1+\beta u_n+\gamma u_n^2}{1+(\beta-2)u_n+\mu u_n^2},\\
&H_f(u_n,v_n,w_n)=\dfrac{1+a u_n+b v_n +\gamma w_n}{1+c u_n+dv_n+\sigma w_n},\\
&W_f(u_n,v_n,w_n,t_n)=\dfrac{1+A_1u_n}{1+A_2v_n+A_3w_n+A_4t_n}+G(u_n,v_n,w_n),
\end{split}
\end{equation*}
also $u_n=\frac{f(y_n)}{f(x_n)}$, $v_n=\frac{f(z_n)}{f(y_n)}$,
$w_n=\frac{f(z_n)}{f(x_n)}$ and $t_n=\frac{f(s_n)}{f(z_n)}$,
\begin{equation*}
\begin{split}
 &a=2, \quad b=0, \quad c=0, \quad
d=-1,\\
&\gamma=2+\sigma, \quad \lambda=-9+\frac{5\beta}{2}, \quad
\mu=-4+\frac{\beta}{2},\\
&A_1=2, \quad A_2=-1, \quad A_3=-2, \quad A_4=-1,
\end{split}
\end{equation*}
with the weight function
\begin{equation}\label{d10}
\begin{split}
G(u_n,v_n,w_n)&=-6u_n^3v_n+6w_n^2-4u_n^4(3v_n+17w_n)+u_n(2v_n^2+4v_n^3+w_n-2w_n^2),\\
\beta&=0, \quad \sigma=-2.
\end{split}
\end{equation}

\end{method}

We test our proposed methods  \eqref{d2}, \eqref{d4}, and
\eqref{d6} on the functions $f_1, \dots, f_6$ described in Table
\ref{table1}, the Table also lists the exact roots $x^*$ and
initial approximations $x_0$, which are computed using the
\texttt{FindRoot} command of Mathematica \cite[pp.\
158--160]{Hazrat}.

For the methods defined by \eqref{d2}, \eqref{d4}, and \eqref{d6},
we display in Tables \ref{table2}-\ref{table4} for the weight
functions \eqref{d1}, \eqref{d3}, \eqref{d5} the errors $|x_n -
x^*|$ for $n=1,2,3$, and the \emph{computational order of
convergence (coc)} \cite{Fer}, approximated by
\begin{equation*}
  \mathrm{coc} \approx \frac{\ln|(x_{n+1}-x^{*})/(x_{n}-x^{*})|}{\ln|(x_{n}-x^{*})/(x_{n-1}-x^{*})|}.
\end{equation*}

\begin{table}[!ht]
\begin{center}
\begin{tabular}{l c c}
  \hline
    Test function $f_n$ & root $x^{*}$ & initial approximation $x_0$
  \\ \hline
    $f_1(x) = \ln (1+x^2)+e^{x}\sin x$ & $0$ & $0.03$
  \\[0.5ex]
   $f_2(x) = \frac{-x}{100}+\sin x$ & $0$ & $0.5$
  \\[0.5ex]
   $ f_3(x)=x \ln (1+x \sin x)+ e^{-1+x^2+ x \cos x}\sin \pi x $ & $ 0 $ & $ 0.01 $
  \\[0.5ex]
 $ f_4(x)=1+e^{2+x-x^2}+x^3-\cos(1+x) $& $ -1 $ & $ -0.3 $
  \\[0.5ex]
  $ f_5(x)=(1-\sin x^2)\frac{x^2+1}{x^3+1}+x\ln(x^2-\pi+1)-\frac{1+\pi}{1+\sqrt{\pi^3}}$&$\sqrt{\pi}$&$1.7$
  \\[0.5ex]
 $ f_6(x)=(1+x^2)\cos(\frac{\pi x}{2})+\frac{\ln(x^2+2x+2)}{1+x^2}$&$-1$&$-1.1$
  \\ \hline
\end{tabular}
\end{center}
\vspace*{-3ex}
\caption{Test functions $f_1, \dots, f_6$, root $x^*$ and initial approximation $x_0$.\label{table1}}
\end{table}

To obtain a high accuracy and avoid loss of significant digits, we
employ multi-precision arithmetic with $6000$ significant decimal
digits in Mathematica 8.

\begin{table}[!ht]
\begin{center}
\begin{tabular}{c c c c c }
\hline $f_n$ & $~~~~\vert x_{1} - x^{*} \vert~~~~$ & $~~~~\vert x_{2} - x^{*} \vert~~~~$ & $~~~~\vert x_{3} - x^{*} \vert~~~~$ & coc\\
\hline
$f_1$ & $0.380e-20$ & $0.126e-319$ & $0.276e-5111$ & $16.0000$ \\
$f_2$ & $0.104e-10$ & $0.265e-192$ & $0.211e-3279$ & $17.0000$ \\
$f_3$ & $0.450e-28$& $0.303e-449$ & $0.561e-7188$ & $16.0000$ \\
$f_4$ & $0.609e-8$ & $0.465e-136$ & $0.630e-2186$ & $16.0000$\\
$f_5$ & $0.246e-14$ & $0.276e-230$ & $0.169e-3685$ & $16.0000$\\
$f_6$ & $0.142e-17$ & $0.482e-283$ & $0.139e-4530$ & $16.0000$\\
\hline
\end{tabular}
\end{center}
\vspace*{-3ex} \caption{Errors and coc for method \eqref{d2}.
\label{table2}}
\end{table}

\hspace{0.5cm}
\begin{table}[!ht]
\begin{center}
\begin{tabular}{c c c c c }
\hline  $f_n$ & $\vert x_{1} - x^{*} \vert$ & $\vert x_{2} -x^{*} \vert$ & $\vert x_{3} - x^{*} \vert$ & coc
\\
\hline
$f_1$ & $0.144e-19$ & $0.193e-310$ & $0.222e-4964$ & $16.0000$ \\
$f_2$ & $0.301e-23$ & $0.339e-451$ & $0.336e-8582$ & $19.0000$ \\
$f_3$ & $0.405e-28$ & $0.515e-450$ & $0.239e-7200$ & $16.0000$ \\
$f_4$ & $0.628e-8$ & $0.276e-135$ & $0.561e-2173$ & $16.0000$\\
$f_5$ & $0.224e-14$ & $0.526e-231$ & $0.456e-3697$ & $16.0000$\\
$f_6$ & $0.186e-17$ & $0.322e-281$ & $0.202e-4501$ & $16.0000$\\
\hline
\end{tabular}
\end{center}
\vspace*{-3ex} \caption{Errors and coc for method \eqref{d4}.
\label{table3}}
\end{table}
\hspace{0.5cm}
\begin{table}[h!]
\begin{center}
\begin{tabular}{ c c c c c }
\hline $f_n$ & $\vert x_{1} - x^{*} \vert$ & $\vert x_{2} - x^{*} \vert$ & $\vert x_{3} - x^{*} \vert$ & coc
\\
\hline
$f_1$ & $0.389e-20$ & $0.931e-321$ & $0.107e-5130$ & $16.0000$ \\
$f_2$ & $0.414e-22$ & $0.1220e-385$ & $0.117e-6565$ & $17.0000$ \\
$f_3$ & $0.936e-29$ & $0.865e-461$ & $0.243e-7373$ & $16.0000$ \\
$f_4$ & $0.554e-8$ & $0.756e-136$ & $0.108e-2181$ & $16.0000$\\
$f_5$ & $0.119e-14$ & $0.116e-235$ & $0.760e-3772$ & $16.0000$\\
$f_6$ & $0.125e-17$ & $0.926e-285$ & $0.745e-4559$ & $16.0000$\\
\hline
\end{tabular}
\end{center}
\vspace*{-3ex} \caption{Errors and coc for method \eqref{d6}
.\label{table4}}
\end{table}

Tables \ref{table2}-\ref{table4} show that methods \eqref{d2},
\eqref{d4}, and \eqref{d6} support the convergence analysis given
in the previous sections.

In Tables \ref{table5}-\ref{table7}, we compare our three-point
method \eqref{dd2} with the methods \eqref{KT0}, \eqref{NNN} and
\eqref{kh1} and our four-point methods \eqref{d2}, \eqref{d4} and
\eqref{d6} with the methods \eqref{KT}, \eqref{NNNN}, \eqref{d7}
and \eqref{d9}.

\begin{table}[!ht]
\begin{center}
\begin{tabular}{ c c c c c c} \hline
 Method & Weight function & $\vert x_{1}- x^{*}\vert$ & $\vert
x_{2} - x^{*} \vert$ & $\vert x_{3} - x^{*} \vert$ & coc
\\
\hline
$(\ref{dd2})$ &(\ref{dd1})& $~~0.834e-13$ & $0.112e-121~~$ & $0.160e-1101$ &$9.0000$\\
$(\ref{KT0})$ &-& $~~0.316e-11$ & $0.165e-95~~$ & $0.918e-770$ &$8.0000$\\
$(\ref{NNN})$ &-& $~~0.445e-11$ & $0.380e-94~~$ & $0.108e-758$ &$8.0000$\\
$(\ref{kh1})$ &-& $~~0.704e-11$ & $0.658e-92~~$ & $0.384e-740$ &$8.0000$\\
$(\ref{d2})$ &(\ref{d1})& $~~0.844e-25$ & $0.659e-435~~$ & $0.257e-7406$ &$17.0000$\\
$(\ref{d4})$ &(\ref{d3})& $~~0.102e-24$ &$0.229e-433~~$ & $0.215e-7380$& $17.0000$ \\
$(\ref{d6})$ &(\ref{d5})& $~~0.371e-25$ &$0.429e-465~~$ & $0.571e-8384$& $18.0000$ \\
$(\ref{KT})$ &-& $~~0.114e-22$ &$0.323e-374~~$ & $0.546e-5999$& $16.0000$ \\
$(\ref{NNNN})$ &-& $~~0.226e-22$ &$0.403e-369~~$ & $0.429e-5917$& $16.0000$ \\
$(\ref{d7})$ &(\ref{d8})&  $~~0.235e-22$ &$0.106e-368~~$ & $0.316e-5910$& $16.0000$ \\
$(\ref{d9})$ &(\ref{d10})& $~~0.310e-21$ &$0.600e-350~~$
&$0.226e-5609$& $16.0000$\\\hline
\end{tabular}
\end{center}
\vspace*{-3ex}
 \caption{Comparison for $f(x) =
\ln(1-x+x^2)+4\sin (1-x)$, zero $x^{*}=1$ and initial
$x_{0}=1.1$.\label{table5}}
\end{table}
\hspace{1cm}
\begin{table}[h!]
\begin{center}
\begin{tabular}{ c c c c c c} \hline
Method & Weight function & $\vert x_{1} -x^{*} \vert$ & $\vert
x_{2} - x^{*} \vert$ & $\vert x_{3} - x^{*} \vert$ & coc
\\
\hline
$(\ref{dd2})$ &(\ref{dd1})& $~~0.125e-10$ & $0.888e-85~~$ & $0.545e-678$ &$8.0000$\\
$(\ref{KT0})$ &-& $~~0.414e-9$ & $0.961e-72~~$ & $0.811e-573$ &$8.0000$\\
$(\ref{NNN})$ &-& $~~0.597e-9$ & $0.273e-70~~$ & $0.533e-561$ &$8.0000$\\
$(\ref{kh1})$ &-& $~~0.629e-9$ & $0.393e-70~~$ & $0.929e-560$ &$8.0000$\\
$(\ref{d2})$ &(\ref{d1})& $~~0.201e-9$ & $0.510e-148~~$ & $0.142e-2365$ &$16.0000$\\
$(\ref{d4})$ &(\ref{d3})& $~~0.210e-9$ &$0.807e-148~~$ & $0.183e-2362$& $16.0000$ \\
$(\ref{d6})$ &(\ref{d5})& $~~0.389e-20$ &$0.931e-321~~$ & $0.107e-5130$& $16.0000$ \\
$(\ref{KT})$ &-& $~~0.883e-18$ &$0.897e-282~~$ & $0.114e-4505$& $16.0000$ \\
$(\ref{NNNN})$ &-& $~~0.183e-17$ &$0.254e-276~~$ & $0.470e-4418$& $16.0000$ \\
$(\ref{d7})$ &(\ref{d8})&  $~~0.274e-10$ &$0.682e-162~~$ & $0.150e-2587$& $16.0000$ \\
$(\ref{d9})$ &(\ref{d10})& $~~0.115e-9$ &$0.377e-149~~$ &
$0.621e-2381$& $16.0000$\\\hline
\end{tabular}
\end{center}
\vspace*{-3ex} \caption{Comparison for $f(x) =
\ln(1+x^2)+e^{x}\sin x$, zero $x^{*}=0$ and initial
$x_{0}=0.1$.\label{table6}}
\end{table}
\hspace{1cm}
\begin{table}[h!]
\begin{center}
\begin{tabular}{ c c c c c c} \hline
Method & Weight function & $\vert x_{1} -x^{*} \vert$ & $\vert
x_{2} - x^{*} \vert$ & $\vert x_{3} - x^{*} \vert$ & coc
\\
\hline
$(\ref{dd2})$ &(\ref{dd1})& $~~0.361e-13$ & $0.209e-106~~$ & $0.270e-852$ &$8.0000$\\
$(\ref{KT0})$ &-& $~~0.230e-12$ & $0.396e-99~~$ & $0.303e-793$ &$8.0000$\\
$(\ref{NNN})$ &-& $~~0.349e-12$ & $0.168e-97~~$ & $0.485e-780$ &$8.0000$\\
$(\ref{kh1})$ &-& $~~0.477e-17$ & $0.463e-141~~$ & $0.366e-1133$ &$8.0000$\\
$(\ref{d2})$ &(\ref{d1})& $~~0.499e-25$ & $0.113e-400~~$ & $0.536e-6411$ &$16.0000$\\
$(\ref{d4})$ &(\ref{d3})& $~~0.406e-25$ &$0.345e-402~~$ & $0.256e-6435$& $16.0000$ \\
$(\ref{d6})$ &(\ref{d5})& $~~0.825e-26$ &$0.242e-414~~$ & $0.760e-6631$& $16.0000$ \\
$(\ref{KT})$ &-& $~~0.211e-24$ &$0.152e-390~~$ & $0.780e-6249$& $16.0000$ \\
$(\ref{NNNN})$ &-& $~~0.484e-24$ &$0.209e-384~~$ & $0.310e-6150$& $16.0000$ \\
$(\ref{d7})$ &(\ref{d8})&$~~0.253e-24$ &$0.173e-389~~$ &$0.372e-6232$& $16.0000$\\
$(\ref{d9})$ &(\ref{d10})& $~~0.454e-22$ &$0.108e-350~~$
&$0.123e-5608$& $16.0000$\\\hline
\end{tabular}
\end{center}
\vspace*{-3ex} \caption{Comparison for $f(x) =
\frac{-2}{27}(9\sqrt{2}+7\sqrt{3})+\sqrt{1-x^2}+(1+x^3)\cos
(\frac{\pi x}{2})$, zero $x^{*}=\frac{1}{3}$ and initial
$x_{0}=0.35$.\label{table7}}
\end{table}
\\
\\
\\
\\
\\
\\
\\
\\
It can be observed from Tables \ref{table5}-\ref{table7} that for
the presented examples our three-point proposed method \eqref{dd2}
is comparable and competitive to the methods \eqref{KT0},
\eqref{NNN} and \eqref{kh1} and our four-point proposed methods
\eqref{d2}, \eqref{d4} and \eqref{d6} are comparable and
competitive to the methods \eqref{KT}, \eqref{NNNN}, \eqref{d7}
and \eqref{d9} also.
\newpage
\subsection{Dynamic behavior}
In this section, we survey the comparison of iterative methods in
the complex plane by using basins of attraction. Studying the
dynamic behavior, using basins of attractions, of the rational
functions associated to an iterative method gives important
information about the convergence and stability of the scheme
\cite{Soleymani2}.

Neta et al.\ have compared various methods for solving nonlinear
equations with multiple roots by comparing the basins of attraction
\cite{Neta} and also Scott et al.\ have compared several methods
for approximating simple roots \cite{Scott}. Moreover, a number of iterative
root-finding methods were compared from a dynamical point of view
by Amat et al. \cite{Amat1}-\cite{Amat4}, Neta et al.
\cite{Neta1}, \cite{Neta2} Stewart \cite{Stewart}, Vrscay and
Gilbert \cite{Vrscay}. To this end, some basic concepts are
briefly recalled.

Let $G:\mathbb{C} \to \mathbb{C} $ be a rational map on the
complex plane. For $z\in \mathbb{C} $, we define its orbit as the
set $orb(z)=\{z,\,G(z),\,G^2(z),\dots\}$. A point $z_0 \in
\mathbb{C} $ is called a periodic point with minimal period $m$ if
$G^m(z_0)=z_0$, where $m$ is the smallest integer with this
property. A periodic point with minimal period $1$ is called a
fixed point. Moreover, a point $z_0$ is called attracting if
$|G'(z_0)|<1$, repelling if $|G'(z_0)|>1$, and neutral otherwise.
The Julia set of a nonlinear map $G(z)$, denoted by $J(G)$, is the
closure of the set of its repelling periodic points. The
complement of $J(G)$ is the Fatou set $F(G)$, where the basin of
attraction of the different roots lie \cite{Babajee},
\cite{Cordero5}.

We use the basin of attraction for comparing the iteration
algorithms. Approximating basins of attraction is a method to visually
comprehend how an algorithm  behaves as a function of the various
starting points.

From the dynamical point of view, in fact, we take a $256 \times
256$ grid of the square $[-3,3]\times[-3,3]\in \mathbb{C}$ and
assign a color to each grid point $z_0$ according to the simple
root to which the corresponding orbit of the iterative method
starting from $z_0$ converges, and we mark the point as black if
the orbit does not converge to a root, in the sense that after at
most 100 iterations it has a distance to any of the roots, which
is larger than $10^{-3}$. In this way, we distinguish the
attraction basins by their color for different methods.
\\
\begin{table}[!ht]
\begin{center}
\begin{tabular}{l c}
  \hline
    Test Problems $p_n$ & Roots
  \\ \hline
   $p_1(z) = z^2+1 $ & $i,\quad -i$
  \\[0.5ex]
   $p_2(z) = z^3+z $ & $0,\quad i,\quad -i$
  \\[0.5ex]
   $p_3(z) = z^3+z^2-1 $ & $ -0.877439+0.744862i, \quad -0.877439-0.744862i,\quad 0.7548878 $
    \\ \hline
\end{tabular}
\end{center}
\vspace*{-3ex} \caption{Test Problems $p_1(z), p_2(z), p_3(z)$ and
roots.\label{table8}}
\end{table}
\\
For the test problem $p_1(z)$, with its roots given in Table
\ref{table8}, the results are presented in Figures
\ref{fig:figure1}-\ref{fig:figure5}. For test problems $p_2(z)$
and $p_3(z)$, the results are shown in Figures
\ref{fig:figure6}-\ref{fig:figure10} and Figures
\ref{fig:figure11}-\ref{fig:figure15}, respectively. As a result,
the method (\ref{d4}) (see Figures \ref{fig:figure1},
\ref{fig:figure6} and \ref{fig:figure11}) seems to produce larger
basins of attraction than the methods ({\ref{d7}}) and ({\ref{d9})
(see Figures \ref{fig:figure4},
 \ref{fig:figure5}, \ref{fig:figure9}, \ref{fig:figure10}, \ref{fig:figure14} and \ref{fig:figure15}) and smaller basins of attraction than the methods (\ref{KT}) and (\ref{NNNN}) (see Figures \ref{fig:figure2},
 \ref{fig:figure3}, \ref{fig:figure7}, \ref{fig:figure8}, \ref{fig:figure12} and \ref{fig:figure13}).

Note that points might belong to no basin of attraction; these are
starting points for which the methods do not converge, approximated and visualized by
black points. These exceptional points constitute the so-called Julia set of the corresponding
methods.

\begin{figure}[ht!]
\begin{minipage}[b]{0.30\linewidth}
\centering
\includegraphics[width=\textwidth]{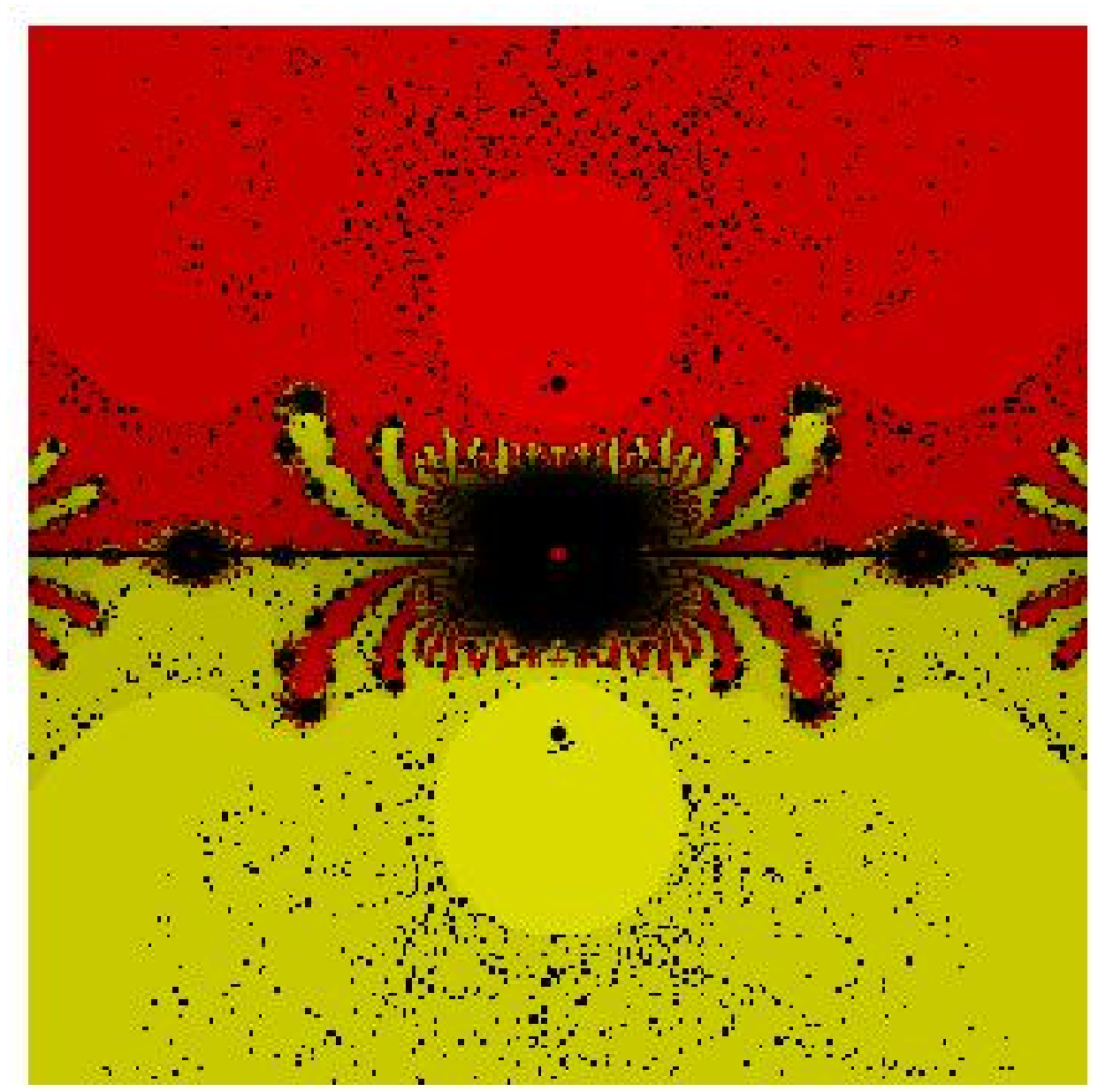}
\caption{Method (\ref{d4}) for test problem $p_1$}
\label{fig:figure1}
\end{minipage}
\hspace{0.5cm}
\begin{minipage}[b]{0.30\linewidth}
\centering
\includegraphics[width=\textwidth]{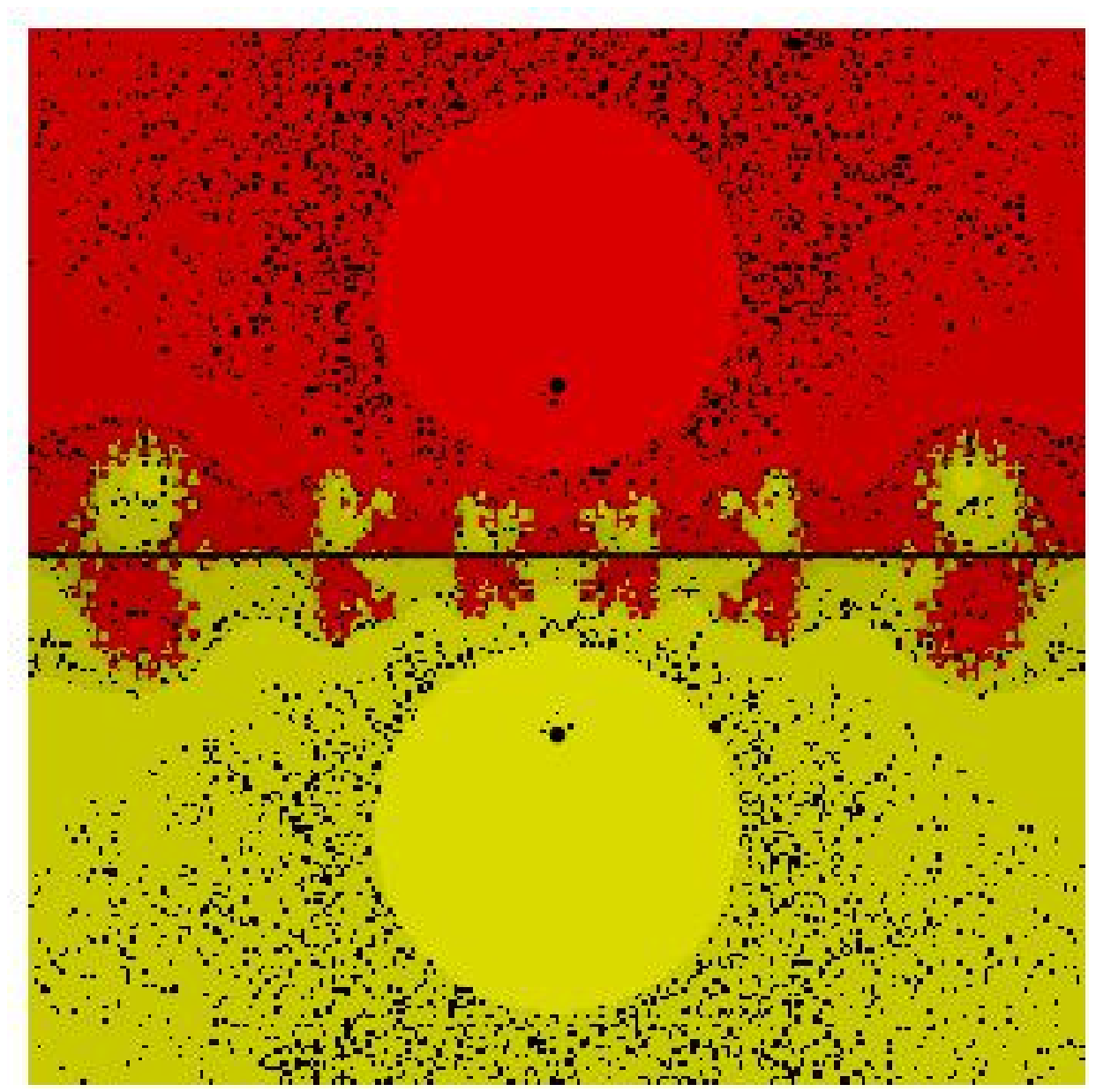}
\caption{Method (\ref{KT}) for test problem
$p_1$}\label{fig:figure2}
\end{minipage}
\hspace{0.5cm}
\begin{minipage}[b]{0.30\linewidth}
\centering
\includegraphics[width=\textwidth]{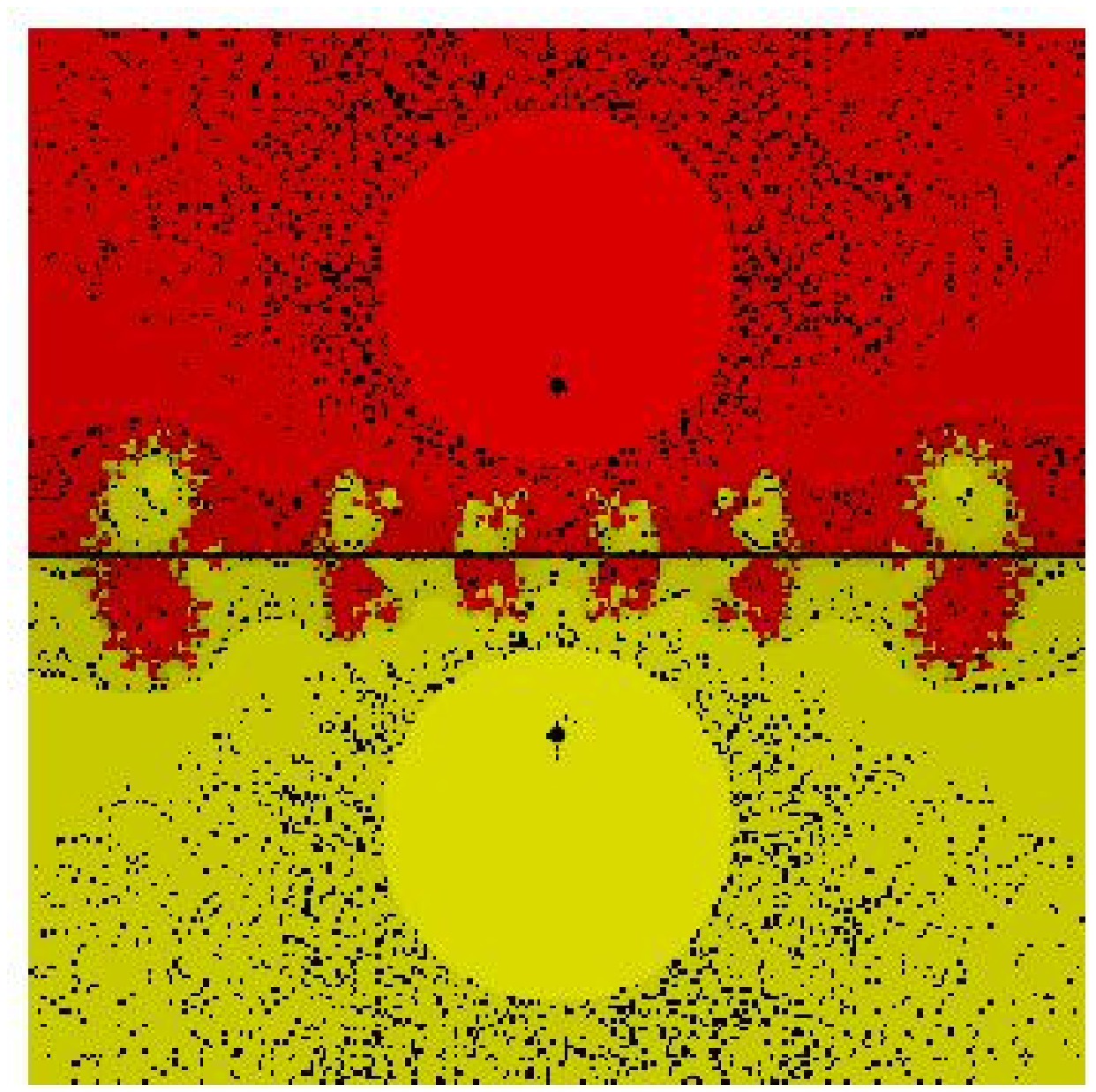}
\caption{Method (\ref{NNNN}) for test problem $p_1$}
\label{fig:figure3}
\end{minipage}
\end{figure}

\begin{figure}
\begin{minipage}[b]{0.30\linewidth} \centering
\includegraphics[width=\textwidth]{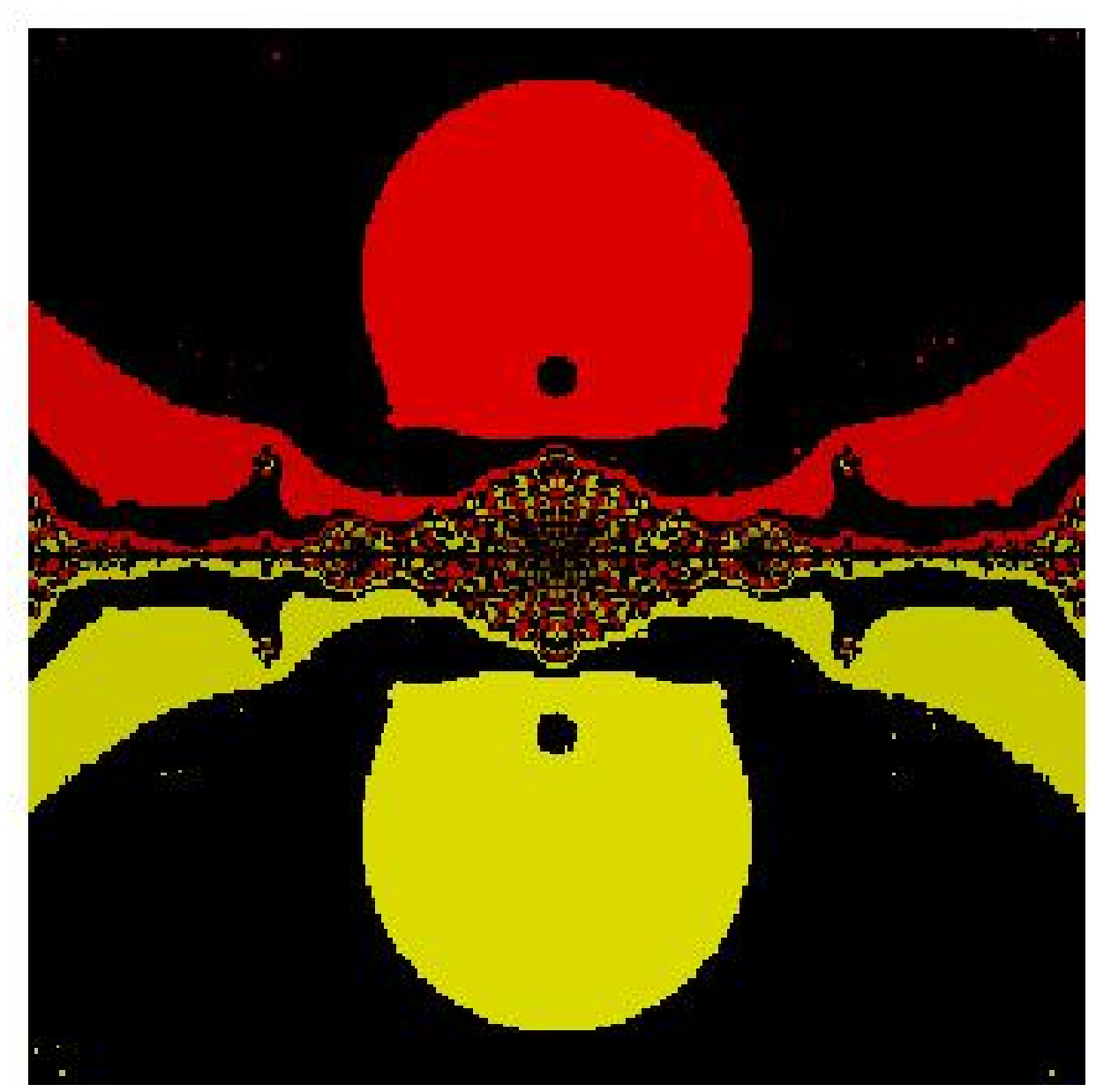}
\caption{Method (\ref{d7}) for test problem $p_1$}
\label{fig:figure4}
\end{minipage}
\hspace{2cm}
\begin{minipage}[b]{0.30\linewidth}
\centering
\includegraphics[width=\textwidth]{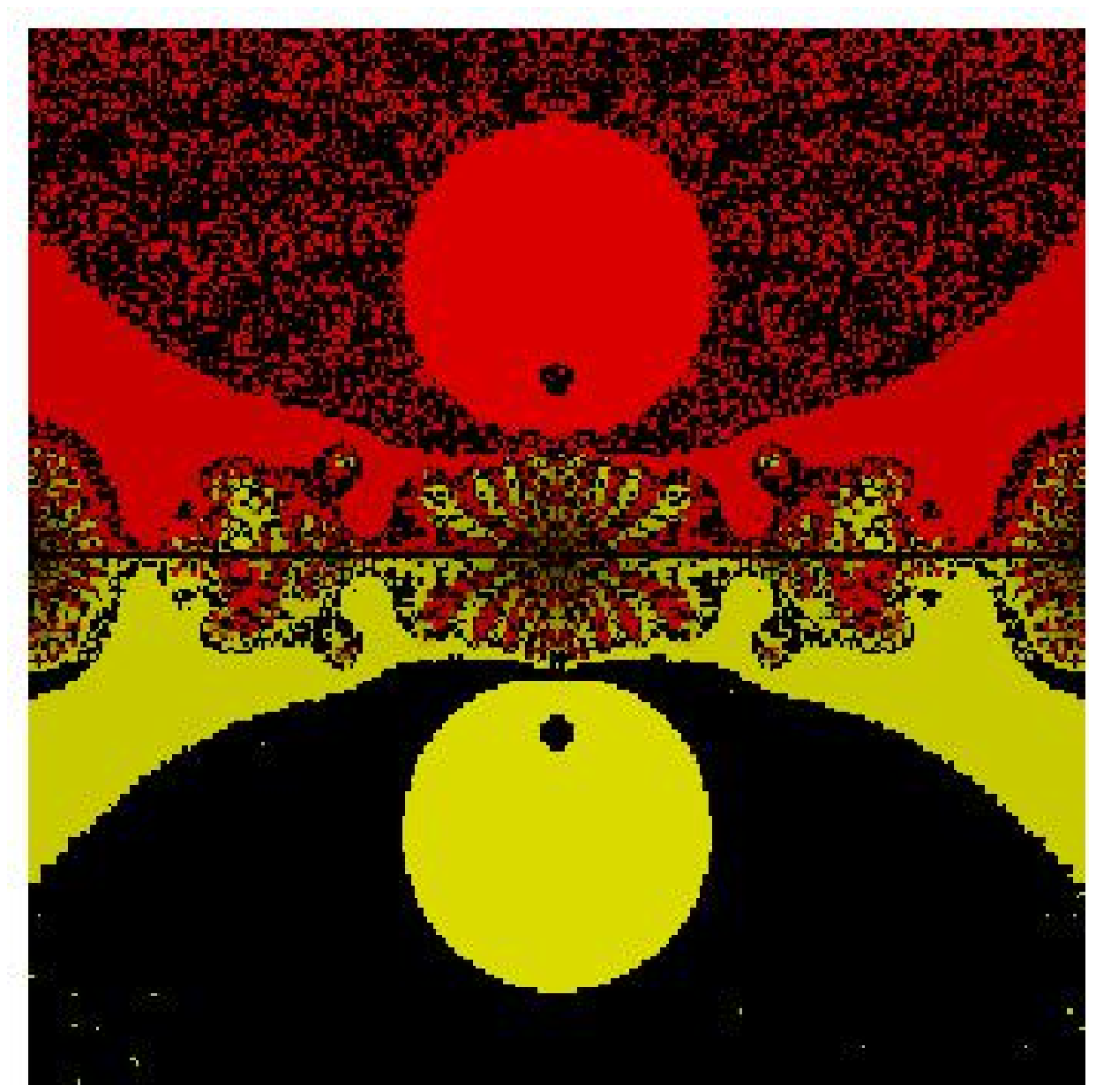}
\caption{Method (\ref{d9}) for test problem
$p_1$}\label{fig:figure5}
\end{minipage}
\end{figure}
\newpage
\begin{figure}[ht!]
\begin{minipage}[b]{0.30\linewidth}
\centering
\includegraphics[width=\textwidth]{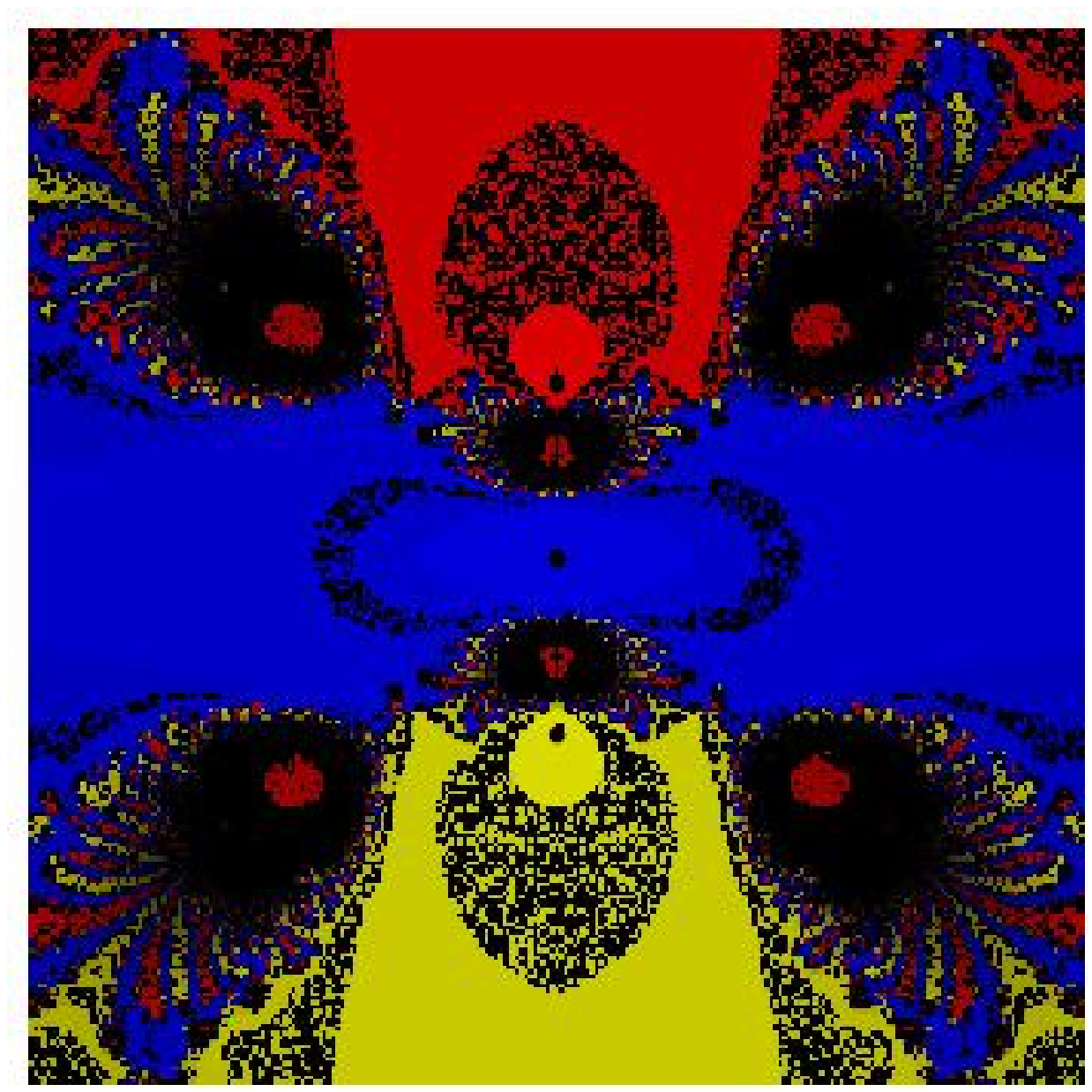}
\caption{Method (\ref{d4}) for test problem $p_2$}
\label{fig:figure6}
\end{minipage}
\hspace{0.5cm}
\begin{minipage}[b]{0.30\linewidth}
\centering
\includegraphics[width=\textwidth]{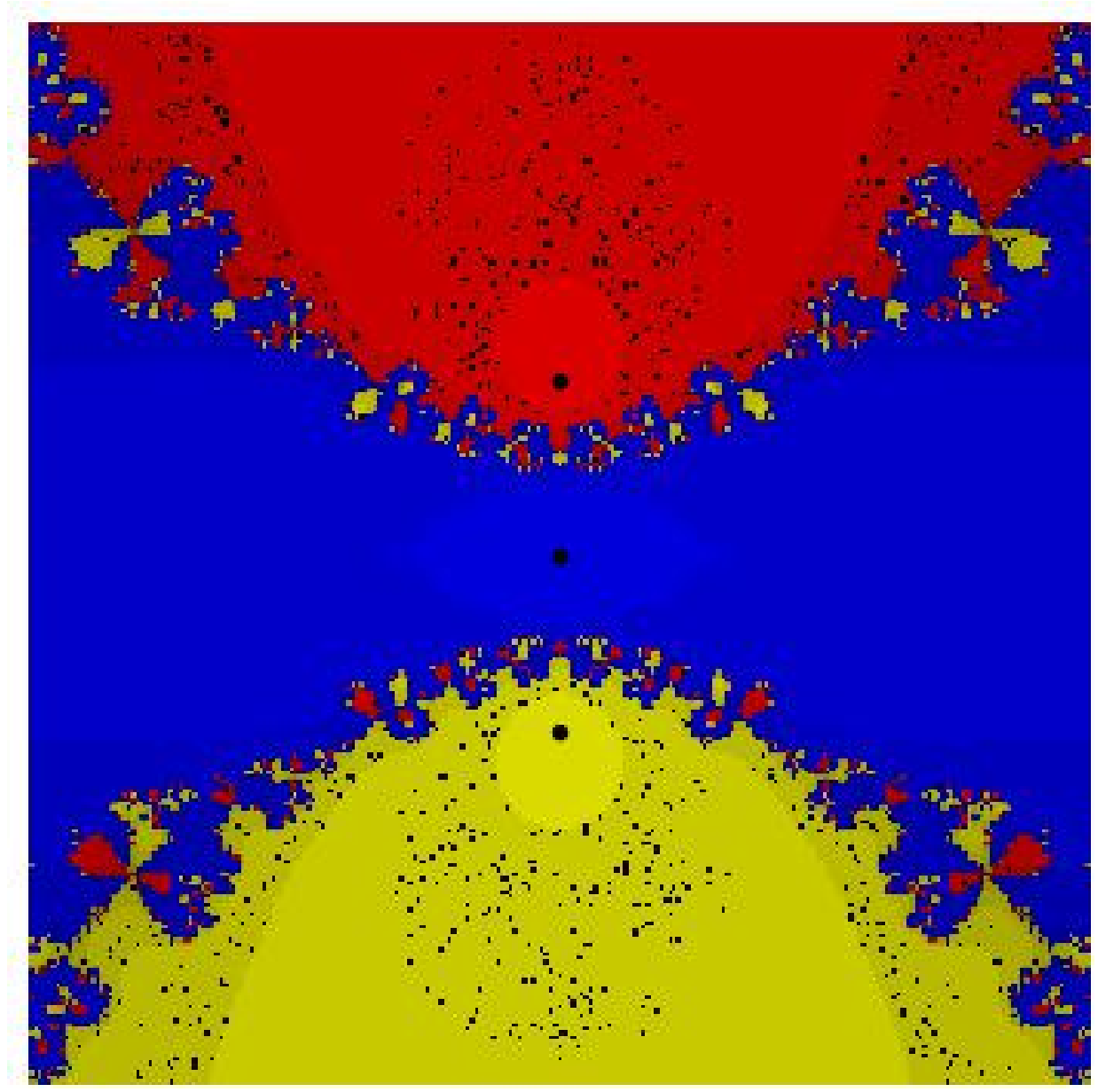}
\caption{Method (\ref{KT}) for test problem $p_2$}
\label{fig:figure7}
\end{minipage}
\hspace{0.5cm}
\begin{minipage}[b]{0.30\linewidth}
\centering
\includegraphics[width=\textwidth]{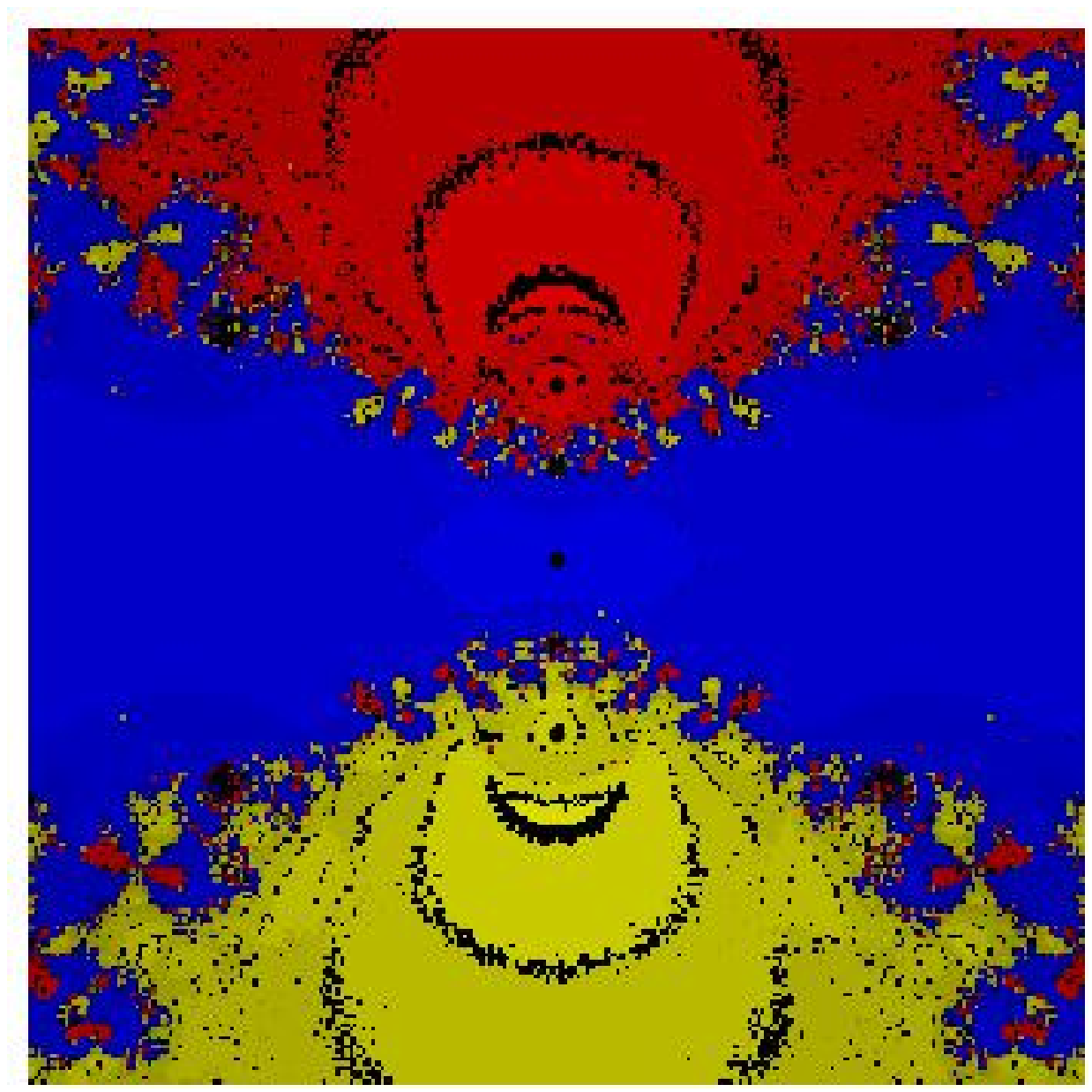}
\caption{Method (\ref{NNNN}) for test problem $p_2$}
\label{fig:figure8}
\end{minipage}
\end{figure}

\begin{figure}
\begin{minipage}[b]{0.30\linewidth}
\centering
\includegraphics[width=\textwidth]{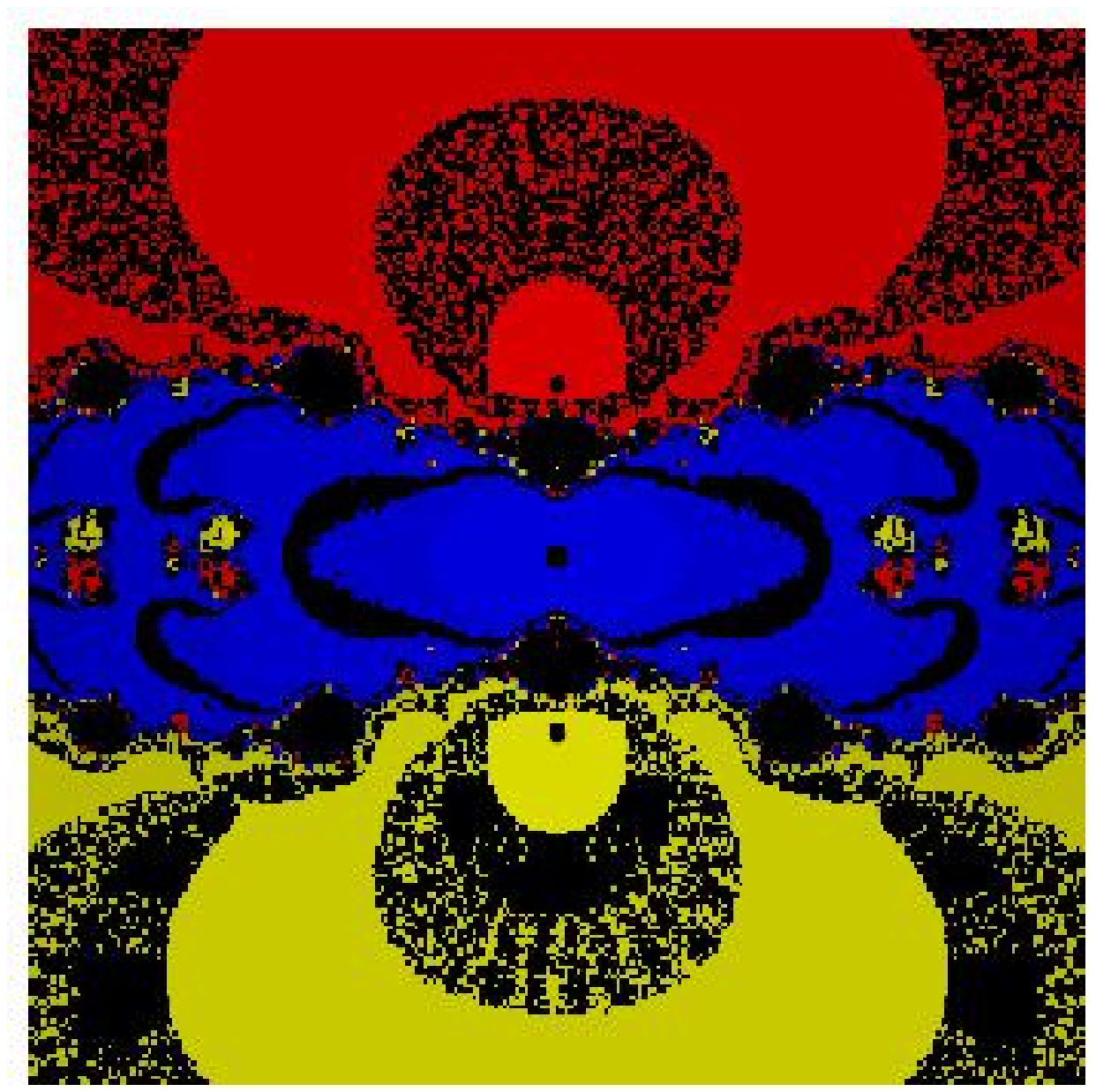}
\caption{Method (\ref{d7}) for test problem $p_2$}
\label{fig:figure9}
\end{minipage}
\hspace{2cm}
\begin{minipage}[b]{0.30\linewidth}
\centering
\includegraphics[width=\textwidth]{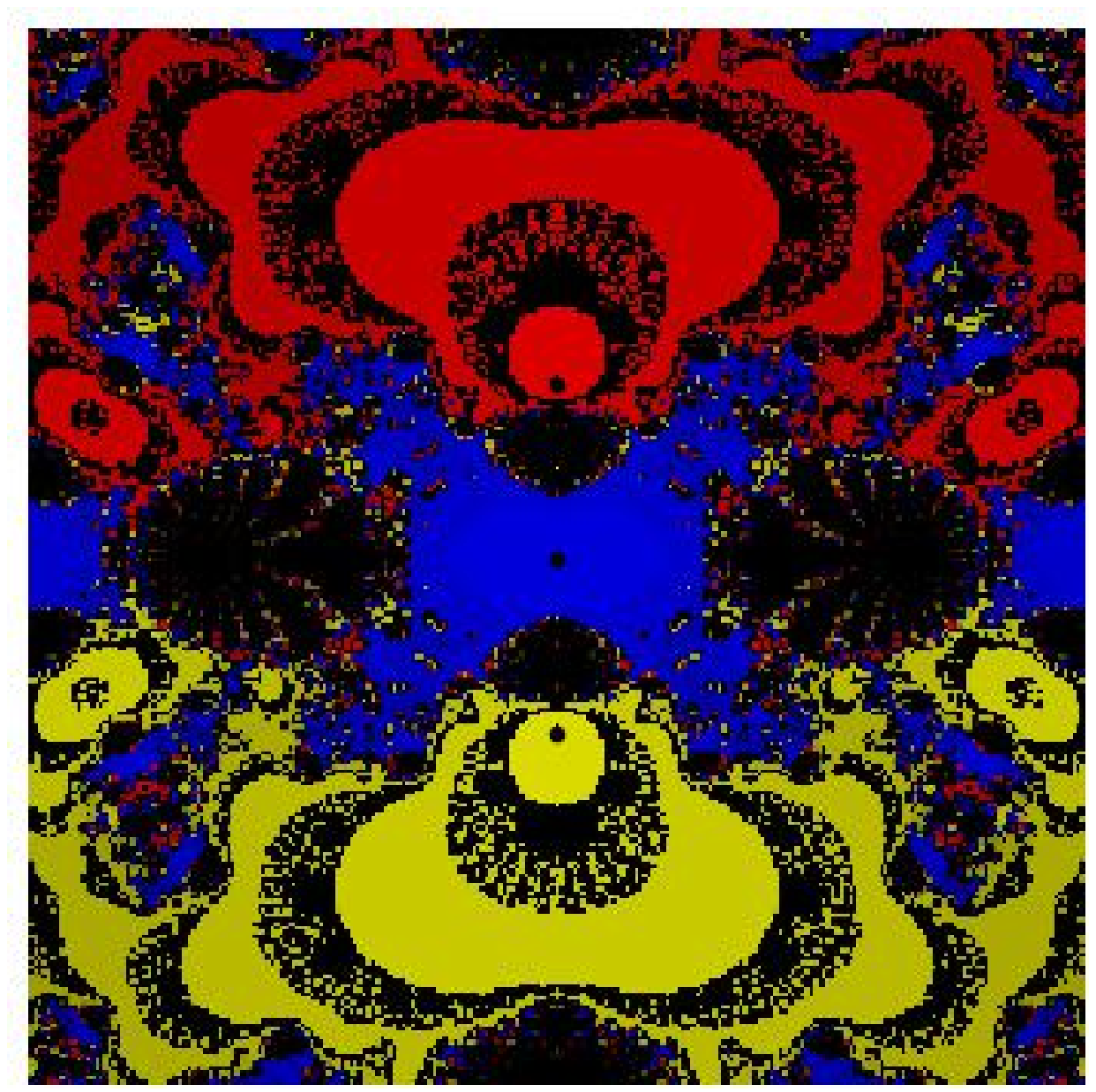}
\caption{Method (\ref{d9}) for test problem
$p_2$}\label{fig:figure10}
\end{minipage}
\end{figure}
\newpage
\begin{figure}[ht!]
\begin{minipage}[b]{0.30\linewidth}
\centering
\includegraphics[width=\textwidth]{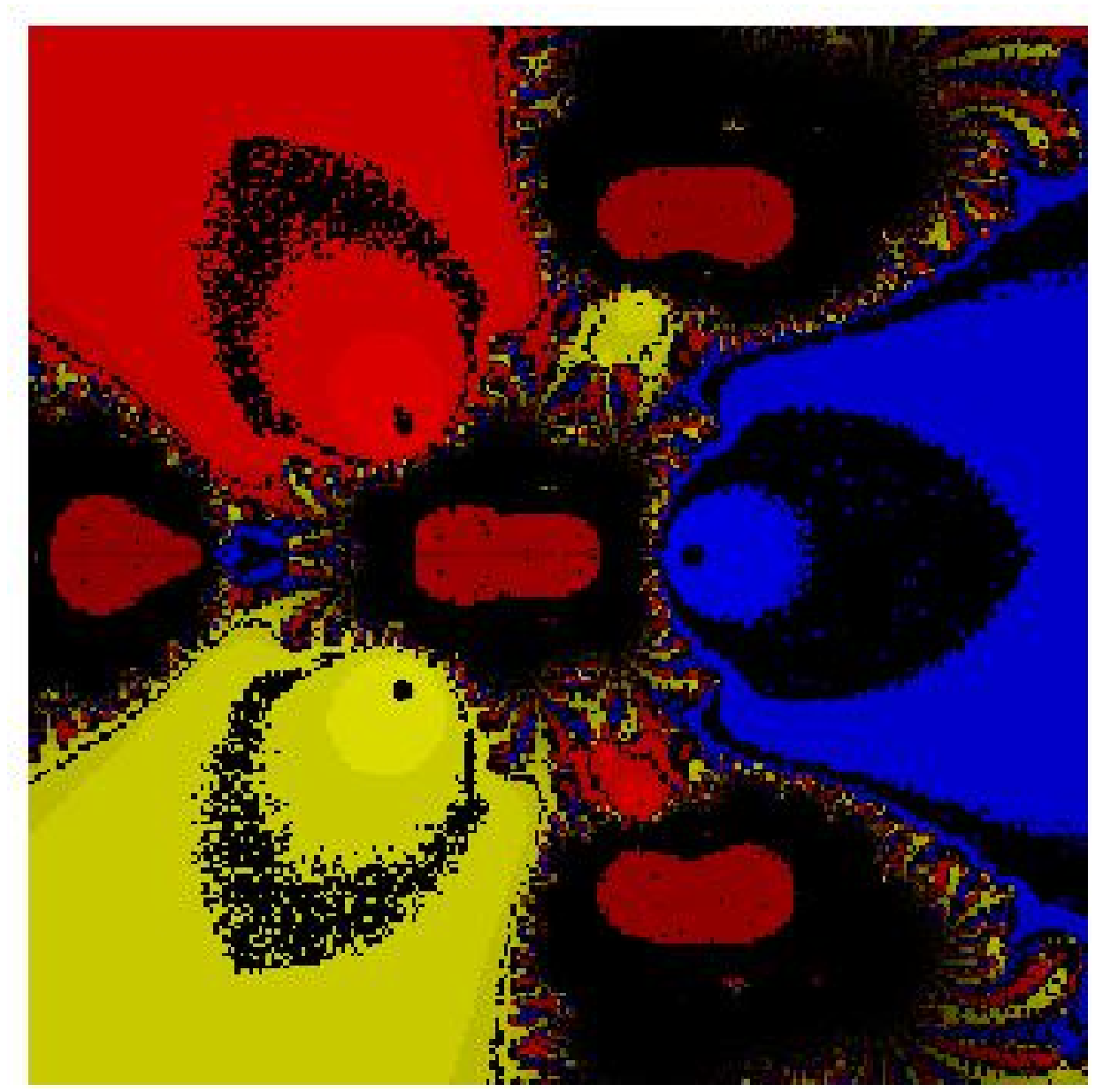}
\caption{Method (\ref{d4}) for test problem $p_3$}
\label{fig:figure11}
\end{minipage}
\hspace{0.5cm}
\begin{minipage}[b]{0.30\linewidth}
\centering
\includegraphics[width=\textwidth]{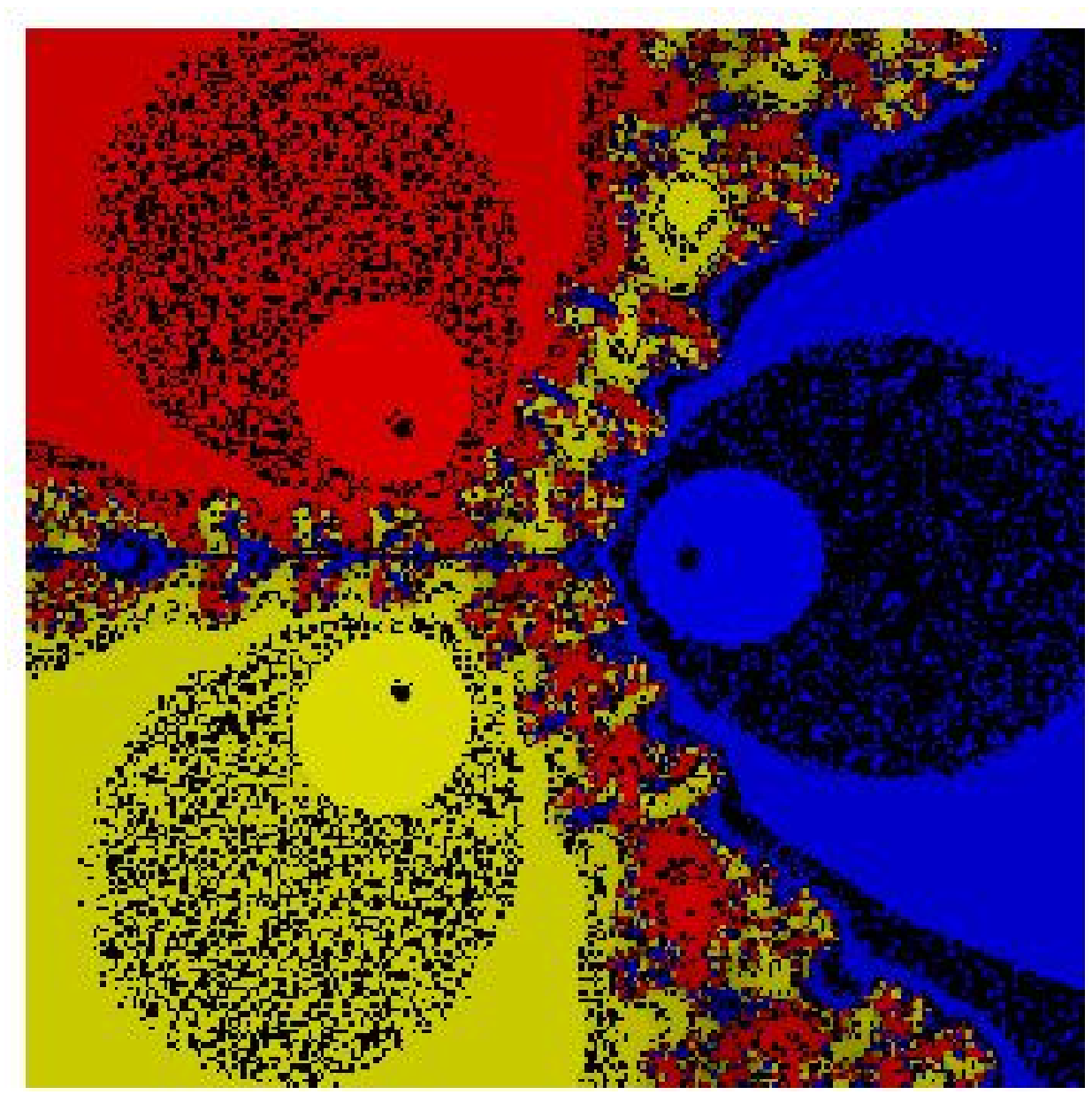}
\caption{Method (\ref{KT}) for test problem $p_3$}
\label{fig:figure12}
\end{minipage}
\hspace{0.5cm}
\begin{minipage}[b]{0.30\linewidth}
\centering
\includegraphics[width=\textwidth]{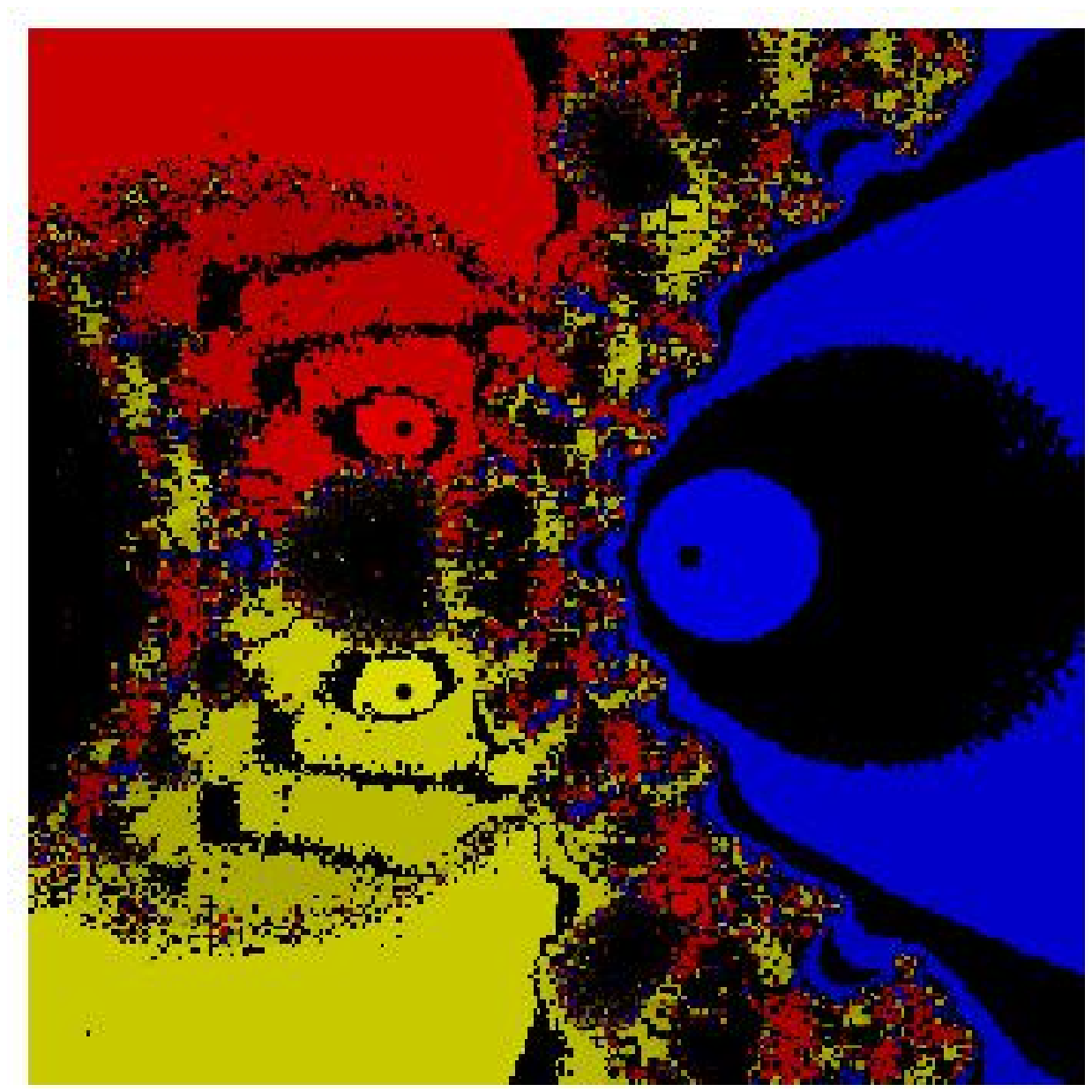}
\caption{Method (\ref{NNNN}) for test problem $p_3$}
\label{fig:figure13}
\end{minipage}
\end{figure}

\begin{figure}
\begin{minipage}[b]{0.30\linewidth}
\centering
\includegraphics[width=\textwidth]{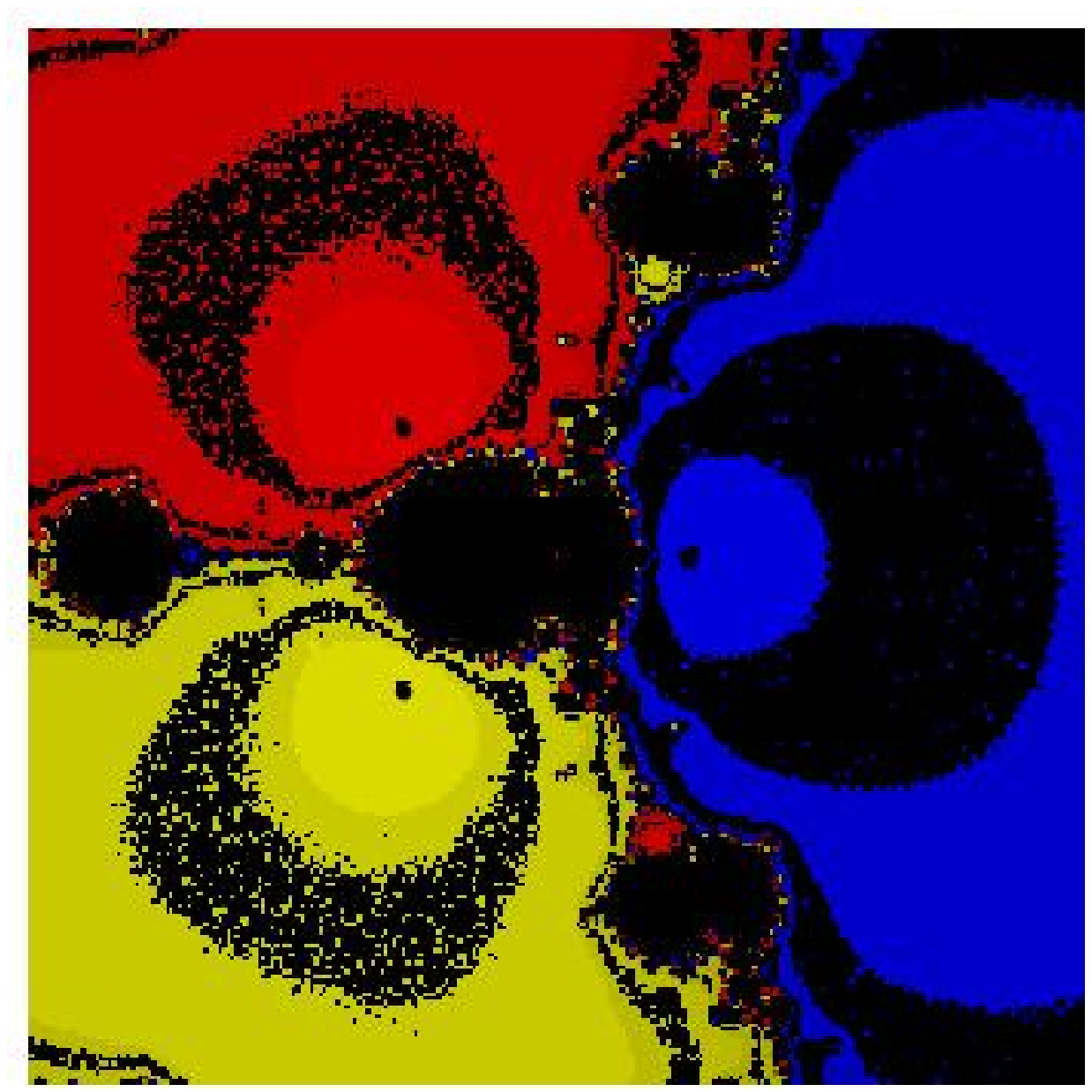}
\caption{Method (\ref{d7}) for test problem $p_3$}
\label{fig:figure14}
\end{minipage}
\hspace{1.5cm}
\begin{minipage}[b]{0.30\linewidth}
\centering
\includegraphics[width=\textwidth]{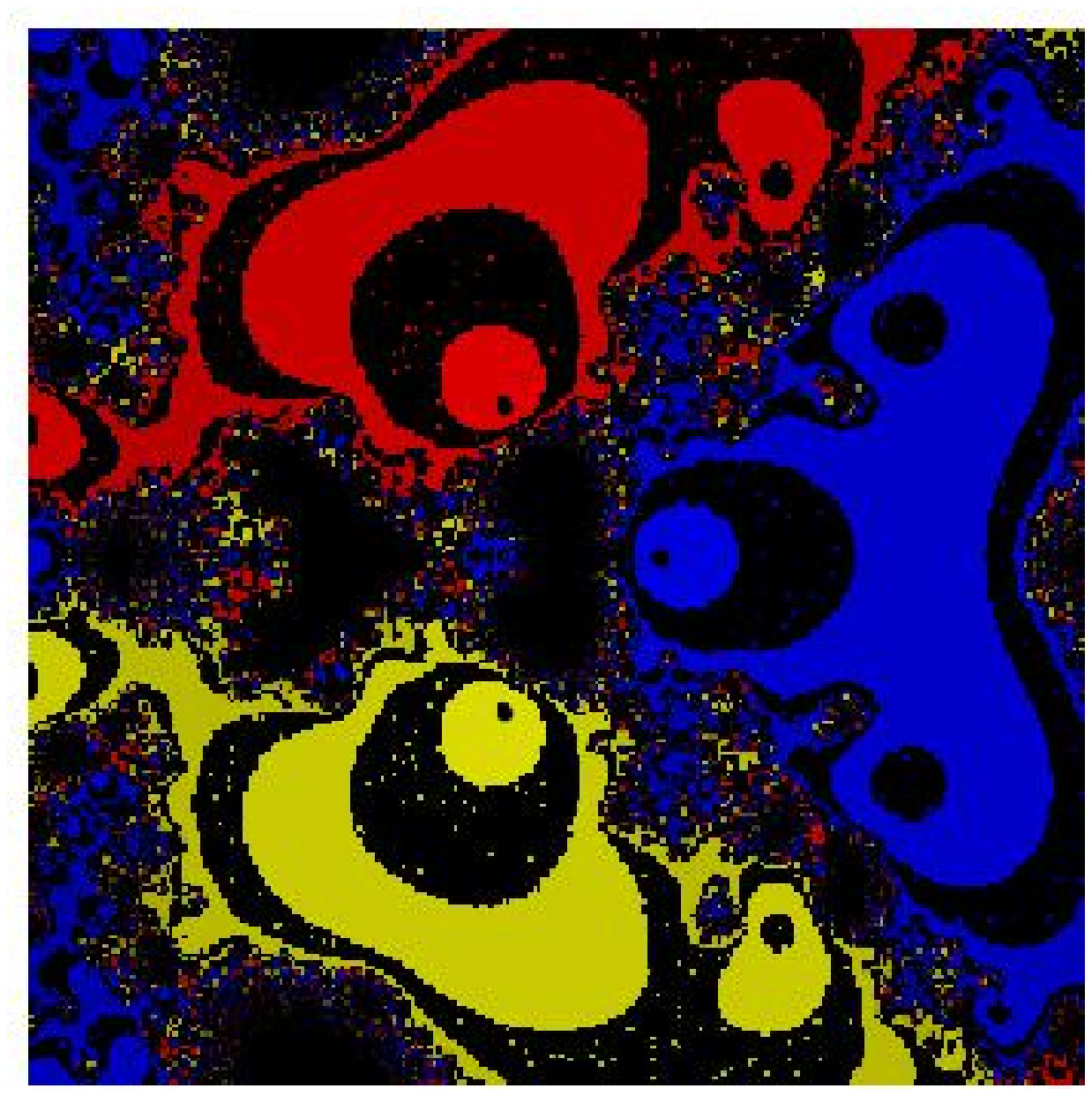}
\caption{Method (\ref{d9}) for test problem
$p_3$}\label{fig:figure15}
\end{minipage}
\end{figure}

\newpage
\section{Conclusion}
We introduce a new optimal class of four-point methods without
memory for approximating a simple root of a given nonlinear
equation. Our proposed methods use five function evaluations for
each iteration. Therefore they support Kung and Traub's conjecture.
Error and convergence analysis are carried out. Numerical examples show that
our methods work and can compete with other methods in the same class.
We used the basin of attraction for comparing the iteration
algorithms.


\begin{thebibliography}{99}

\bibitem{Amat1}Amat, S., Busquier, S., Plaza, S., Iterative root-finding methods, Unpublished report, (2004).

\bibitem{Amat2}Amat, S., Busquier, S., Plaza, S., Review of some iterative root-finding methods from a dynamical point of view, Scientia Series A: Math. Sci., 10, 3-35, (2004).

\bibitem{Amat3}Amat, S., Busquirer, S., Plaza, S., Dynamics of a family of third-order itrative methods that do not require using second derivatives, Appl. Math. Comput., 154, 735-746, (2004).

\bibitem{Amat4}Amat, S., Busquier, S., Plaza, S., Dynamics of the King and Jarratt iterations, Aeq. Math., 69, 212-223, (2005).

\bibitem{Babajee}
Babajee, D.K.R., Thukral, R., On a 4-point sixteen-oredr King
family of iterative method for solving nonlinear equations,
International Journal of Mathematics and Mathematical Sciences,
 DOI: 10.1155/2012/979245, (2012).

\bibitem{Chun1}Chun, C., Lee, M.Y., A new optimal eighth-order family of iterative methods for the solution of nonlinear equations,
Appl. Math. Comput., 223, 506-519, (2013).

\bibitem{Cordero5}
Cordero, A., Fardi, M., Ghasemi, M., Torregrosa, J.R., Accelerated
iterative methods for finding solutions of nonlinear equations and
their dynamical behavior, Calcolo, 1-14, (2012).


\bibitem{Cordero0} Cordero, A., Lotfi, T., Bakhtiari, P., Torregrosa, J.R., An efficient two-parametric family with memory for nonlinear equations,
 Numer. Algor., DOI 10.1007/s11075-014-9846-8, (2014).



\bibitem{Cordero} Cordero, A., Lotfi, T., Mahdiani, K., Torregrosa, J.R., Two optimal general classes of iterative methods
with eighth-order, Acta Appl. Math., DOI
10.1007/s10440-014-9869-0, (2013).

\bibitem{Cordero22}
Cordero, A., Torregrosa, J.R., Vassileva, M.P., Three-step
iterative methods with optimal eighth-order convergence, J.
Comput. Appl. Math., 235, 3189-3194, (2011).

\bibitem{Geum1}
Geum, Y. H., Kim, Y. I., A biparametric family optimally
convergent sixteenth-order multipoint methods with their
fourth-step weighting function as a sum of a rational and a
generic two-variable function, J. Comput. Appl. Math., 235,
3178-3188, (2011).

\bibitem{Geum2}
Geum, Y. H., Kim, Y. I., A family of optimal sixteen-order
multipoint method with a linear fraction plus a trivariate
polynomial as the fourth-step weighting function, Computers, Math.
with Appl., 61, 3278-3287, (2011).

\bibitem{Geum3}
Geum, Y. H., Kim, Y. I., A biparametric family of four-step
sixteen-oredr root-finding methods with the optimal efficiency
index, Appl. Math. Lett., 24, 1336-1342, (2011).



\bibitem{Hazrat}
 Hazrat, R., Mathematica�: A Problem-Centered Approach, Springer-Verlag, 2010.


\bibitem{Jarrat}
Jarratt, P., Some fourth order multipoint iterative methods for
solving equations, Math. Comput., 20, 434-437, (1966).

\bibitem{Khattri}
Khattri, S.K., Steihaug, T., Algorithm for forming derivative-free
optimal methods, Numer. Algor., 65, 809-824, (2014).


\bibitem{King}
King, R.F., Family of four order methods for nonlinear equations
SIAM J. Numer. Anal.,  10, 876-879, (1973).

\bibitem{Kung}
 Kung, H.T., Traub, J.F., Optimal order of one-point and multipoint iteration, Assoc. Comput. Math. 21, 643-651, (1974).

\bibitem{Lotfi0}
Lotfi, T., Cordero, A., Torregrosa, J.R., Amir Abadi, M.,
Mohammadi Zadeh, M., On generalization based on Bi et al.
iterative methods with eighth-order convergence for solving
nonlinear equations, The Scientific World Journal, Article ID
272949, (2014).

\bibitem{Lotfi}
Lotfi, T., Sharifi, S., Salimi, M., Siegmund, S., A new class of
three-point methods with optimal convergence order eight and its
dynamics, Numer. Algor., DOI 10.1007/s11075-014-9843-y, (2014).


\bibitem{Lotfi1} Lotfi, T., Tavakoli, E., On a new efficient Steffensen-like iterative class by applying a suitable self-accelerator
parameter, The Scientific World Journal, Article ID 769758,
(2014).

\bibitem{Neta0} Neta, B., On a family of multipoint
methods for nonlinear equations, Intern. J. Computer Math., 9,
353-361, (1981).

\bibitem{Neta1} Neta, B., Chun, C., Scott, M., Basin of
attractions for optimal eighth order methods to find simpl roots
of nonlinear equations, Appl. Math. Comput., 227, 567-592, (2014).

\bibitem{Neta}
Neta, B., Scott, M., Chun, C., Basin attractors for various
methods for multiple roots, Appl. Math. Comput., 218, 5043-5066,
(2012).

\bibitem{Neta2} Neta, B., Scott, M., Chun, C., Basin of
attraction for several methods to find simple roots of nonlinear
equations, Appl. Math. Comput., 218, 10548-10556, (2012).

\bibitem{Ostrowski}
Ostrowski, A.M., Solution of Equations and Systems of Equations,
Academic Press, New York, 1966.

\bibitem{Petkovic1} Petkovic, M.S., A note on the priority of optimal multipoint methods for solving nonlinear equations, Appl. Math. Comput., 219, 5249-5252, (2013).

\bibitem{Petkovic2} Petkovic, M.S., Neta, B., Petkovic, L.D., Dzunic, J., Multipoint methods for solving nonlinear equations: A survey, Appl. Math. Comput., 226, 635-660, (2014).

\bibitem{Scott}Scott, M., Neta, B., Chun, C., Basin attractors for various  methods, Appl. Math. Comput., 218, 2584-2599, (2011).


\bibitem{Sharma1}
Sharma, J.R., Sharma, R., A new family of modified Ostrowski's
methods with accelerated eighth order convergence, Numer. Algor.,
54, 445-458, (2010).

\bibitem{Soleymani2} Soleymani, F.,  Lotfi, T., Bakhtiari, P., A multi-step class of iterative methods for nonlinear
systems, Optim. Lett., 8(3), 1001-1015, (2014).

\bibitem{Stewart}Stewart, B.D., Attractor Basins of Various Root-Finding Methods M.S. thesis, Naval Postgraduate School, Department of Applied Mathematics, Monterey, CA, June, 2001.

\bibitem{Traub}
Traub, J.F., Iterative Methods for the Solution of Equations,
Prentice Hall, New York, 1964.

\bibitem{Vrscay}Vrscay, E.R., Gilbert, W.J., Extraneous fixed points, basin boundaries and chaotic dynamics for Schroder and Konig rational iteration functions, Numer. Math., 52, 1-16, (1988).

\bibitem{Fer} Weerakoon, S., Fernando, T.G.I.,
A variant of Newton's method with accelerated third-order
convergence, Appl. Math. Lett., 13, 87-93, (2000).
\end{thebibliography}
\end{document}